\title{Incomparable $\omega_1$-like models of set theory}
\author[Fuchs]{Gunter Fuchs}
\address[G.~Fuchs]{Mathematics,
          The Graduate Center of The City University of New York,
          365 Fifth Avenue, New York, NY 10016
          \&
          Mathematics,
          College of Staten Island of CUNY,
          Staten Island, NY 10314}
\email{Gunter.Fuchs@csi.cuny.edu}
\urladdr{}
\author[Gitman]{Victoria Gitman}
\address[V.~Gitman]{Mathematics,
          The Graduate Center of The City University of New York,
          365 Fifth Avenue, New York, NY 10016}
\email{vgitman@nylogic.org}
\urladdr{http://boolesrings.org/gitman}
\author[Hamkins]{Joel David Hamkins}
 \address[J.~D.~Hamkins]{Mathematics, Philosophy, Computer Science,
          The Graduate Center of The City University of New York,
          365 Fifth Avenue, New York, NY 10016
          \&
          Mathematics,
          College of Staten Island of CUNY,
          Staten Island, NY 10314}
\email{jhamkins@gc.cuny.edu}
\urladdr{http://jdh.hamkins.org}
\thanks{The research of the third author has been supported in part by NSF grant DMS-0800762, PSC-CUNY grant 64732-00-42, CUNY Collaborative Incentive Award 80209-06 20 and Simons Foundation grant 209252. The authors are thankful to Roman Kossak and Ali Enayat for conversations concerning the ideas in this article.  Commentary concerning this paper can be made at \href{http://jdh.hamkins.org/incomparable-omega-one-like-models-of-set-theory}{http://jdh.hamkins.org/incomparable-omega-one-like-models-of-set-theory}.}
\newtheorem{theorem}{Theorem}
\newtheorem{sublemma}{Lemma}[theorem]
\newtheorem{lemma}[theorem]{Lemma}
\theoremstyle{definition}
\newtheorem{question}[theorem]{Question}
\newcommand{\concat}{%
  \mathord{
    \mathchoice
    {\raisebox{1ex}{\scalebox{.7}{$\frown$}}}
    {\raisebox{1ex}{\scalebox{.7}{$\frown$}}}
    {\raisebox{.7ex}{\scalebox{.5}{$\frown$}}}
    {\raisebox{.7ex}{\scalebox{.5}{$\frown$}}}
  }
}
\newcommand{\Coll}{\mathop{\rm Coll}}
\newcommand{\image}{\mathbin{\hbox{\tt\char'42}}}
\newcommand{\Union}{\bigcup}
\newcommand{\of}{\subseteq}
\newcommand{\lt}[1]{{\smalllt}#1}
\newcommand{\smallleq}{\mathrel{\mathchoice{\raise2pt\hbox{$\scriptstyle\leq$}}{\raise1pt\hbox{$\scriptstyle\leq$}}{\raise1pt\hbox{$\scriptscriptstyle\leq$}}{\scriptscriptstyle\leq}}}
\newcommand{\smalllt}{\mathrel{\mathchoice{\raise2pt\hbox{$\scriptstyle<$}}{\raise1pt\hbox{$\scriptstyle<$}}{\raise0pt\hbox{$\scriptscriptstyle<$}}{\scriptscriptstyle<}}}
\newcommand{\Add}{\mathop{\rm Add}}
\newcommand{\ZFC}{{\rm ZFC}}
\newcommand{\Levy}{L{\'e}vy}
\newcommand{\la}{\langle}
\newcommand{\ra}{\rangle}
\newcommand{\Los}{\L o\'s}
\newcommand{\Godel}{G\"{o}del}
\newcommand{\restrict}{\upharpoonright}
\newcommand{\GBC}{{\rm GBC}}
\newcommand{\PA}{{\rm PA}}
\newcommand{\ZF}{\ensuremath{\mathrm{ZF}}}
\newcommand{\df}{\it}
\def\<#1>{\langle#1\rangle}
\newcommand\HF{{\rm HF}}
\newcommand\Ord{{\rm Ord}}
\newcommand\satisfies{\models}
\newcommand\elesub{\prec}
\newcommand{\Lowenheim}{L\"owenheim}
\newcommand{\set}[1]{\{\,{#1}\,\}}
\newcommand{\intersect}{\cap}
\newcommand\sbar{{\bar s}}
\newcommand\tbar{{\bar t}}
\newcommand\Q{{\mathbb{Q}}}
\newcommand\PP{{\mathbb{P}}}
\newcommand{\Con}{\mathop{{\rm Con}}}
\begin{document}

\today

\begin{abstract}
We show that the analogues of the Hamkins embedding theorems~\cite{Hamkins2013:EveryCountableModelOfSetTheoryEmbedsIntoItsOwnL}, proved for the countable models of set theory, do not hold when extended to the uncountable realm of $\omega_1$-like models of set theory. Specifically, under the $\diamondsuit$ hypothesis and suitable consistency assumptions, we show that there is a family of $2^{\omega_1}$ many $\omega_1$-like models of $\ZFC$, all with the same ordinals, that are pairwise incomparable under embeddability; there can be a transitive $\omega_1$-like model of \ZFC\ that does not embed into its own constructible universe; and there can be an $\omega_1$-like model of \PA\ whose structure of hereditarily finite sets is not universal for the $\omega_1$-like models of set theory.
\end{abstract}

\maketitle

\section{Introduction}

We should like to consider the question of whether the embedding theorems of Hamkins~\cite{Hamkins2013:EveryCountableModelOfSetTheoryEmbedsIntoItsOwnL}, recently proved for the countable models of set theory, might extend to the realm of uncountable models. Specifically, Hamkins proved that (1) any two countable models of set theory are comparable by embeddability; indeed, (2) one countable model of set theory embeds into another just in case the ordinals of the first order-embed into the ordinals of the second; consequently, (3) every countable model of set theory embeds into its own constructible universe; and furthermore, (4) every countable model of set theory embeds into the hereditarily finite sets $\<\HF,{\in}>^M$ of any nonstandard model of arithmetic $M\satisfies\PA$. The question we consider here is, do the analogous results hold for uncountable models? Our answer is that they do not. Indeed, we shall prove that the corresponding statements do not hold even in the special case of $\omega_1$-like models of set theory, which otherwise among uncountable models often exhibit a special affinity with the countable models. Specifically, we shall construct large families of pairwise incomparable $\omega_1$-like models of set theory, even though they all have the same ordinals; we shall construct $\omega_1$-like models of set theory that do not embed into their own $L$; and we shall construct $\omega_1$-like models of \PA\ that are not universal for all $\omega_1$-like models of set theory.

The Hamkins embedding theorems are  expressed collectively in theorem~\ref{Theorem.HamkinsEmbeddingTheorems} below. An {\df embedding} of one model $\<M,{\in^M}>$ of set theory into another $\<N,{\in^N}>$ is simply a function $j:M\to N$ for which $x\in^My\longleftrightarrow j(x)\in^Nj(y)$, for all $x,y\in M$, and in this case we say that $\<M,{\in^M}>$ {\df embeds} into $\<N,{\in^N}>$; note by extensionality that every embedding is injective.
\begin{figure}\label{Figure.Embedding}
\begin{tikzpicture}[scale=.15,xscale=.8,>=latex]
 \draw[thick] (-26,0) --(-30,11) --(-22,11) --(-26,0);
 \draw[thick] (0,0) --(6,12) --(-6,12) --(0,0);
 \draw (0,2.5) --(1,5.5) --(-1,5.5) --(0,2.5);
 \draw (1.3,6.4) --(2,8.5) --(-2,8.5) --(-1.3,6.4) --(1.3,6.4);
 \draw (2.25,9.25) --(2.7,10.9) --(-2.7,10.9) --(-2.25,9.25) --(2.25,9.25);
 \draw[->] (-24,5.5) to [out=30,in=160] (-1.7,7.6);
 \node at (-16,7) {$j$};
 \node[above] at (0,12) {$N$};
 \node[above] at (-26,11) {$M$};
\end{tikzpicture}
\qquad\quad\raise 25pt\hbox{$x\in^M y\ \longleftrightarrow\ j(x)\in^N j(y)$}
\caption{An embedding $j:M\to N$}
\end{figure}
Thus, an embedding is simply an isomorphism of $\<M,{\in^M}>$ with its range, which is a submodel of $\<N,{\in^N}>$, as illustrated in figure~\ref{Figure.Embedding}. Although this is the usual model-theoretic embedding concept for relational structures, the reader should note that it is a considerably weaker embedding concept than commonly encountered in set theory, because this kind of embedding need not be elementary nor even $\Delta_0$-elementary, although clearly every embedding as just defined is elementary at least for quantifier-free assertions. So we caution the reader not to assume a greater degree of elementarity beyond quantifier-free elementarity for the embeddings appearing in this paper, except where we explicitly remark on it.

\begin{theorem}[Hamkins~\cite{Hamkins2013:EveryCountableModelOfSetTheoryEmbedsIntoItsOwnL}]\label{Theorem.HamkinsEmbeddingTheorems}\
\begin{enumerate}
 \item For any two countable models of set theory $\la M,\in^M\ra$ and $\la N,\in^N\ra$, one of them embeds into the other.
 \item Indeed, such an $\<M,{\in^M}>$ embeds into $\<N,{\in^N}>$ if and only if the ordinals of $M$ order-embed into the ordinals of $N$.
 \item Consequently, every countable model $\la M,\in^M\ra$ of set theory embeds into its own constructible universe $\la L^M,\in^M\ra$.
$$\qquad\begin{tikzpicture}[xscale=.06,yscale=.25,>=latex]
 \draw[thick] (-0,0) --(12,5) --(-12,5) --(0,0);
 \draw[dotted] (0,0) --(9,6);
 \draw[dotted] (0,0) --(-9,6);
 \node[anchor=south west] at (-1,5) {$L^M$};
 \draw (0,1) --(1,2) --(-1,2) --(0,1);
 \draw (1.4,2.4) --(2.1,3.1) --(-2.1,3.1) --(-1.4,2.4) --(1.4,2.4);
 \draw (2.5,3.5) --(3,4) --(-3,4) --(-2.5,3.5) --(2.5,3.5);
 \draw (3.5,4.5) --(4,5) --(-4,5) --(-3.5,4.5) --(3.5,4.5);
 \draw[->] (-8,3.33) to [out=190,in=150] (-10,2.5) to [out=-20,in=190] (-1.8,2.8);
 \node[anchor=north east] at (-9,2.5) {$j$};
 \node[anchor=north west] at (8,4) {$M$};
\end{tikzpicture}
\qquad\quad\raise 25pt\hbox{$j:M\to L^M$,\qquad $x\in y\ \longleftrightarrow\ j(x)\in j(y)$}
$$

 \item Furthermore, every countable model of set theory embeds into the hereditary finite sets $\<\HF,{\in}>^M$ of any nonstandard model of arithmetic $M\satisfies\PA$. Indeed, $\HF^M$ is universal for all countable acyclic binary relations.
\end{enumerate}
\end{theorem}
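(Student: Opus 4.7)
The plan is to identify claim (4) as the workhorse and then deduce (3) from it, handling (1) and (2) via a closely related rank-by-rank construction. I would begin by proving that $\langle\HF,\in\rangle^M$ is universal for countable acyclic binary relations whenever $M$ is a nonstandard model of $\PA$. Given a countable acyclic digraph $\langle A, R\rangle$, acyclicity lets me enumerate $A$ as $a_0,a_1,\ldots$ in a rank-respecting order, so that every $R$-predecessor of $a_n$ appears earlier. I would then build the embedding $j : A \to \HF^M$ recursively, setting $j(a_n)$ to be the Ackermann code in $M$ of the (externally finite) set $\{j(a_k) : k<n,\ a_k \mathrel R a_n\}\cup\{t_n\}$, where $t_n$ is a freshly chosen nonstandard element of $M$ distinct from all previously chosen codes and tags. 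Such a code exists in $M$ as the sum $\sum 2^e$ ranging over $e$ in this set. The tag $t_n$ forces distinct Ackermann extensions and hence injectivity, and the rank-respecting enumeration guarantees that no later element is required to lie in the Ackermann extension of $j(a_n)$.

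For (3), I would work inside $L^M$, which itself satisfies $\ZFC$. Via a Henkin construction internal to $L^M$, there is a countable nonstandard $N\satisfies\PA$ appearing as a set of $L^M$. Externally, $\langle\HF,\in\rangle^N$ is a countable acyclic binary relation sitting inside $L^M$, and $\langle M,\in^M\rangle$ is itself a countable acyclic binary relation since $M$ satisfies Foundation. Applying the universality just established then gives an embedding $j: M \to \HF^N \subseteq L^M$, which is the desired embedding into the constructible universe of $M$.

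For (2), the forward direction is straightforward: any embedding $j : M\to N$ preserves $\in$, so $\alpha<\beta$ in $\Ord^M$ yields $j(\alpha)\in^N j(\beta)$, and so the $N$-ranks of the images give an order-embedding of $\Ord^M$ into $\Ord^N$. For the converse and for (1), the plan is to leverage (4) internally in $N$: locate a nonstandard model of $\PA$ inside $L^N$, use its $\HF$ as a universal thickening structure, and lift the ordinal order-embedding $\pi:\Ord^M\to\Ord^N$ to a full embedding by arranging, level-by-level, that each $x\in M$ of $M$-rank $\alpha$ is sent into a thickened slice of $N$ tied to $\pi(\alpha)$. Claim (1) then follows from (2) together with the auxiliary observation that, among countable models of $\ZFC$, the order-types of the ordinals are always pairwise comparable --- each is a countable well-founded initial segment followed (if the model is $\omega$-nonstandard) by a dense countable tail --- and any two such orders order-embed one way or the other.

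The principal obstacle I anticipate is making the rank-by-rank lift in the converse of (2) coherent with the already-fixed ordinal embedding $\pi$. Without the flexibility afforded by an internal nonstandard $\HF$ there is not enough combinatorial room at each level to embed $V^M_\alpha$ below $\pi(\alpha)$ in $N$; routing the construction through Ackermann codes inside an internal nonstandard model of $\PA$, exactly as in the proof of (4), is what I expect to make this go through.
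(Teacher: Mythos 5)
First, note that the paper itself does not prove theorem~\ref{Theorem.HamkinsEmbeddingTheorems}; it is quoted from~\cite{Hamkins2013:EveryCountableModelOfSetTheoryEmbedsIntoItsOwnL}, so there is no in-paper proof to compare against, and your proposal must be judged on its own terms. It has a genuine gap at its foundation, in the proof of (4). You propose to enumerate the countable acyclic relation $\langle A,R\rangle$ ``in a rank-respecting order, so that every $R$-predecessor of $a_n$ appears earlier.'' Such an enumeration exists only when $R$ is well-founded, and acyclicity does not imply well-foundedness: already $\mathbb{Z}$ with the successor relation admits no such enumeration. Worse, the cases that carry the whole content of the theorem are exactly the ill-founded ones, since $\langle M,\in^M\rangle$ for a nonstandard countable model of \ZFC\ is acyclic (internal foundation rules out finite $\in$-cycles) but externally ill-founded. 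The real difficulty, which your recursion never confronts, is that a later element $a_n$ may be required to satisfy $j(a_n)\in^M j(a_k)$ for an already-fixed value $j(a_k)$; handling this forces one to choose each image with a reserved nonstandard block of potential future members and to use acyclicity only in the form of topologically sorting each \emph{finite} initial segment of the enumeration so that the reservations can be met coherently. That is where the nonstandardness of $M$ does its work; your tag trick secures injectivity but does nothing for this.

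The derivation of (3) also fails as written. If $N\models\PA$ is obtained by a Henkin construction internal to $L^M$, then the membership relation of $\langle\HF,\in\rangle^N$ is the Ackermann relation as computed inside $N$, not the restriction of $\in^M$ to a subset of $L^M$; so an embedding $j\colon M\to\HF^N$ is not an embedding of $\langle M,\in^M\rangle$ into $\langle L^M,\in^M\rangle$. To land in a genuine $\in^M$-substructure of $L^M$ you must take $N=\mathbb{N}^M$, for which $\langle\HF,\in\rangle^N$ is isomorphic to $\langle V_\omega^M,\in^M\rangle\subseteq L^M$ --- but that $N$ is nonstandard only when $M$ is $\omega$-nonstandard, so the $\omega$-standard case needs a separate treatment (most naturally, deducing (3) from (2) via $\Ord^M=\Ord^{L^M}$). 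The level-by-level lift sketched for the converse of (2), and the comparability-of-ordinals claim behind (1), are plausible in outline but are plans rather than proofs; the two points above are the ones that actually break.
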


One can begin to get an appreciation for the difference in embedding concepts by observing that \ZFC\ proves that there is a nontrivial embedding $j:V\to V$, namely, the embedding recursively defined as follows $$j(y)=\bigl\{\ j(x)\ \mid\ x\in y\ \bigr\}\cup\bigl\{\{\emptyset,y\}\bigr\}.$$
We leave it as a fun exercise to verify that $x\in y\longleftrightarrow j(x)\in j(y)$ for the embedding $j$ defined by this recursion.\footnote{See~\cite{Hamkins2013:EveryCountableModelOfSetTheoryEmbedsIntoItsOwnL}; but to give a hint here for the impatient, note that every $j(y)$ is nonempty and also $\emptyset\notin j(y)$; it follows that inside $j(y)$ we may identify the pair $\{\emptyset,y\}\in j(y)$; it follows that $j$ is injective and furthermore, the only way to have $j(x)\in j(y)$ is from $x\in y$.} Contrast this situation with the well-known Kunen inconsistency~\cite{Kunen1971:ElementaryEmbeddingsAndInfinitaryCombinatorics}, which asserts that there can be no nontrivial $\Sigma_1$-elementary embedding $j:V\to V$. Similarly, the same recursive definition applied in $L$ leads to nontrivial embeddings $j:L\to L$, regardless of whether $0^\sharp$ exists. But again, the point is that embeddings are not necessarily even $\Delta_0$-elementary, and the familiar equivalence of the existence of $0^\sharp$ with a nontrivial ``embedding'' $j:L\to L$ actually requires a $\Delta_0$-elementary embedding.

We find it interesting to note in contrast to theorem~\ref{Theorem.HamkinsEmbeddingTheorems} that there is no such embedding phenomenon in the the context of the countable models of Peano arithmetic (where an embedding of models of arithmetic is a function preserving all atomic formulas in the language of arithmetic). Perhaps the main reason for this is that embeddings between models of \PA\ are automatically $\Delta_0$-elementary, as a consequence of the MRDP theorem, whereas this is not true for models of set theory, as the example above of the recursively defined embedding $j:V\to V$ shows, since this is an embedding, but it is not $\Delta_0$-elementary, in light of $j(\emptyset)\neq\emptyset$. For countable models of arithmetic $M,N\satisfies\PA$, one can show that there is an embedding $j:M\to N$ if and only if $N$ satisfies the $\Sigma_1$-theory of $M$ and the standard system of $M$ is contained in the standard system of $N$. It follows that there are many instances of incomparability. Meanwhile, it is a consequence of theorem~\ref{Theorem.HamkinsEmbeddingTheorems} statement (4) that the embedding phenomenon recurs with the countable models of finite set theory $\ZFC^{\neg\infty}$, that is, with $\<\HF,{\in}>^M$ for $M\satisfies\PA$, since all nonstandard such models are universal for all countable acyclic binary relations, and so in the context of countable models of $\ZFC^{\neg\infty}$ there are precisely two bi-embeddability classes, namely, the standard model, which is initial, and the nonstandard countable models, which are universal.

Our main theorems are as follows.

\newtheorem*{maintheorems}{Main Theorems}
\begin{maintheorems}\
 \begin{enumerate}
  \item If $\diamondsuit$ holds and \ZFC\ is consistent, then there is a family $\mathcal C$ of $2^{\omega_1}$ many pairwise incomparable $\omega_1$-like models of $\ZFC$, meaning that there is no embedding between any two distinct models in $\mathcal C$.
  \item The models in statement (1) can be constructed so that their ordinals order-embed into each other and indeed, so that the ordinals of each model is a universal $\omega_1$-like linear order. If \ZFC\ has an $\omega$-model, then the models of statement (1) can be constructed so as to have precisely the same ordinals.
  \item If $\diamondsuit$ holds and \ZFC\ is consistent, then there is an $\omega_1$-like model $M\models\ZFC$ and an $\omega_1$-like model $N\models\PA$ such that $M$ does not embed into $\<\HF,{\in}>^N$.
  \item If there is a Mahlo cardinal, then in a forcing extension of $L$, there is a transitive $\omega_1$-like model $M\of\ZFC$ that does not embed into its own constructible universe $L^M$.
 \end{enumerate}
\end{maintheorems}

These results appear later as theorems~\ref{Theorem.FamilyOfIncomparableModels},~\ref{Theorem.FamilyOfIncomparableModelsSameOrdinals},~\ref{Theorem.DiamondImpliesOmega1LikeZFCnotembedPA}, and~\ref{th:incomparableTransitive}. Note that the size of the family $\mathcal C$ in statement (1) is as large as it could possibly be, given that any two elements in a pairwise incomparable family of structures must be non-isomorphic and there are at most $2^{\omega_1}$ many isomorphism types of $\omega_1$-like models of set theory or indeed of structures of size $\omega_1$ in any first-order finite language. Statement (2) shows that the models of the family $\mathcal C$ serve as $\omega_1$-like counterexamples to the assertion that one model of set theory embeds into another whenever the ordinals of the first order-embed into the ordinals of the second.

\goodbreak
\section{$\omega_1$-like models of set theory and other background}
\label{sec:omega1models}

The ordinal $\omega_1$ is the only uncountable ordinal all of whose proper initial segments are countable. Generalizing this, a linear order is {\df $\omega_1$-like}, if it is uncountable, but all proper initial segments are countable. For example, a model of \PA\ is $\omega_1$-like, if it is uncountable, but all proper initial segments are countable. Similarly, a model of set theory $\<M,{\in^M}>$ is {\df $\omega_1$-like}, if it is uncountable, but every rank initial segment $V_\alpha^M$ for $\alpha\in\Ord^M$ is countable. For models of \ZF, this is equivalent to saying that $M$ is uncountable, but every object $y\in M$ has only countably many $\in^M$-predecessors, that is, $\{ x\in M\mid x\in^M y\}$ is countable; for models of \ZFC, it is also equivalent to asserting that the ordinals $\Ord^M$ are $\omega_1$-like as a linear order. The $\omega_1$-like models constitute a gateway from the realm of countable models to the uncountable, sharing and blending many of the features of both kinds of models, and they have been extensively studied both in the case of models of arithmetic and of models of set theory~\cite{Kaufmann1983:BluntAndToplessEndExtensionsOfModelsOfSetTheory, Kaufmann1977:ARatherClasslessModel, Kossak1985:RecursivelySaturatedOmega1LikeModels, Enayat1984:OnCertainElementaryEndExtensionsOfModelsOfSetTheory, MarkerSchmerlSteinhorn:UncountableRealClosedFieldsWithPAIntegerParts}.

One obvious way to construct an $\omega_1$-like model $M$ is as the union of a continuous elementary chain of countable models:
$$
\begin{tikzpicture}[scale=.3,xscale=.5]
\draw[thick] (0,0) -- (6,12) -- (-6,12) -- (0,0);
\draw (-2,4) -- (2,4);
\node at (2,4) [right] {$M_0$};
\draw (-3,6) -- (3,6);
\node at (3,6) [right] {$M_1$};
\draw (-4.5,9) -- (4.5,9);
\node at (4.5,9) [right] {$M_\alpha$};
\node at (0,8.5)  {$\vdots$};
\node at (6,12) [right] {$M$};
\node at (0,11.5) {$\vdots$};
\end{tikzpicture}
\qquad\raise 5em\hbox{$M_0\elesub M_1\elesub\cdots\elesub M_\alpha\elesub\cdots\elesub M=\Union_{\alpha<\omega_1}M_\alpha,$}
$$
At each step we should have an elementary top-extension $M_\alpha\elesub_t M_{\alpha+1}$, meaning that the new elements of $M_{\alpha+1}$ have rank exceeding that of any element of $M_\alpha$, as defined precisely below; and at limit stages $\lambda$ we take unions $M_\lambda=\Union_{\alpha<\lambda}M_\alpha$. It is a consequence of lemma~\ref{Lemma.KeislerMorleyTopExtensions}, a result due to Kiesler and Morley~\cite{KeislerMorley1968:ElementaryExtensionsOfModelsOfSetTheory}, that every countable model of set theory has such an elementary top-extension. In this way, every proper initial segment of the final model $M$ is contained in some $M_\alpha$, which is countable. Thus, the $\omega_1$-like model $M$ grows from the bottom out of its countable elementary initial segments.

Conversely, however, it is not hard to see that every $\omega_1$-like model $M$ must arise exactly in this way as the union of a continuous elementary chain of countable elementary initial segments, because a simple \Lowenheim-Skolem argument shows that there will be unboundedly many such countable elementary initial segments. So the obvious construction method is in a sense the only construction method for building $\omega_1$-like models. Because every $\omega_1$-like model is thus the union of an elementary chain of length $\omega_1$, these models naturally inherit much of the set-theoretic structure and context of $\omega_1$, such as clubs, stationary sets, and constructions via $\diamondsuit$, and it is by taking advantage of this set-theoretic structure that we shall prove our main theorems.

Suppose that $\<M,{\in^M}>$ and $\<N,{\in^N}>$ are models of set theory. We say that the first is a {\df submodel} of the second, written $M\of N$ or more properly $\<M,{\in^M}>\of\<N,{\in^N}>$, if $M$ is a subset of $N$ and $\in^M$ is the restriction of $\in^N$ to the domain $M$, which is to say that the two models agree on whether $a\in b$ for any elements $a,b\in M$. The extension is a {\df transitive} extension, written $M\sqsubseteq N$, if the larger model adds no new elements to old sets, which is to say, $a\in^N b\in M$ implies $a\in M$; this is also sometimes called an {\df end-extension} (this is not the same as a top-extension). Thus, a transitive extension occurs when the submodel is transitive with respect to the membership relation of the larger model, such as in the case of a forcing extension $M\of M[G]$ or of the inner model $L^M\of M$. A {\df top-extension}, in contrast, written $M\sqsubseteq_t N$, occurs when the new sets of the larger model all have higher rank in the von Neumann hierarchy than any old set; that is, if whenever $a\in N\setminus M$ and $b\in M$, then the rank of $a$ in $N$ is higher than the rank of $b$ in $N$. For example, every model $M\satisfies\ZF$ is a top-extension of its rank initial segments $V_\alpha^M\sqsubseteq_t M$. An {\df elementary top-extension}, written $M\elesub_t N$, occurs when a top-extension is also elementary, meaning that every first-order assertion about some objects in $M$ has the same truth value in $M$ as it does in $N$. For models of \ZF, an elementary transitive extension (or elementary end-extension) is the same thing as an elementary top-extension, because $V_\alpha^M$ is definable in $M$ from $\alpha$ and so must by elementarity be equal to $V_\alpha^N$ as defined in $N$. Meanwhile, the nontrivial forcing extensions $M\of M[G]$ provide examples of transitive extensions (end-extensions) that are not top-extensions, and they are never elementary. A model $M$ is {\df topless} in a top-extension $M\sqsubseteq_t N$, if there is no least upper bound of $\Ord^M$ in $N$; otherwise $M$ is {\df topped} in $N$. For any model of set theory $\<M,{\in^M}>$ and any element $a\in M$, let us introduce the following notation
$$a^M=\set{b\in M\mid M\satisfies b\in a},$$
to refer to the set of objects in $M$ that $M$ believes to be elements of $a$. If $M\of N$ is a submodel of another model, then $a^N\intersect M$ is the {\df trace} of $a$ on $M$. Note that another way to say that an extension $M\of N$ is transitive is to say that $a^M=a^N$ for all $a\in M$.

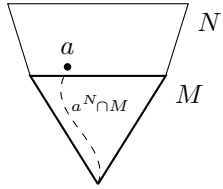
\begin{figure}[here]
\begin{tikzpicture}[scale=.3,yscale=.8]
 \draw[thick] (0,0) -- (3,6) --(-3,6) --(0,0);
 \draw (3,6) --(4,10) --(-4,10) --(-3,6);
 \node[below right] at (4,10) {$N$};
 \node[below right] at (3,6) {$M$};
 \node at (-1.35,6.5) (c) [circle, fill=black,scale=.3,label=above:$a$] {};
 \draw[dashed] (0,0) to [out=75,in=-110] (-1.5,6);
 \node[right] at (-1.6,4.5) {\tiny $a^{N}\!{\cap}M$};
\end{tikzpicture}
\caption{The trace of $a$ on $M$}
\end{figure}

The success of the elementary chain construction in building an $\omega_1$-like model relies, of course, on the fact that every countable model of set theory indeed has an elementary top-extension.

\begin{lemma}[Keisler-Morley\cite{KeislerMorley1968:ElementaryExtensionsOfModelsOfSetTheory}]\label{Lemma.KeislerMorleyTopExtensions}
 Every countable model $\<M,{\in^M}>\satisfies\ZFC$ has an elementary top-extension.
\end{lemma}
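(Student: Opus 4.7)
My plan is to produce an elementary top-extension of $M$ by a countable definable-ultrapower construction, essentially the original Keisler--Morley approach. Fix a countable $\<M,{\in^M}\>\satisfies\ZFC$, and work externally with the Boolean algebra $\mathcal B$ of all $M$-definable (with parameters) subclasses of $\Ord^M$, which is countable since $M$ is. The idea is to find an ultrafilter $U\of\mathcal B$ meeting two kinds of dense requirements: (i) for every $\beta\in\Ord^M$, $U$ contains the tail class $\set{\alpha\in\Ord^M : \alpha>\beta}$; (ii) for every $M$-definable function $f\from\Ord^M\to X$ with $X\in M$, $U$ contains some fiber $f^{-1}(c)$ with $c\in X$. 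Each such requirement is dense in $\mathcal B$: for (ii), given an unbounded definable $A$, the decomposition $A=\bigcup_{c\in X}(A\intersect f^{-1}(c))$ has set-many pieces inside $M$, so by replacement at least one piece is unbounded, and this piece refines $A$ while entering the relevant dense set. Since there are only countably many such requirements in $V$, a standard countable diagonalization produces the desired $U$.

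Given $U$, form the definable ultrapower $N$ whose elements are equivalence classes $[f]$ of $M$-definable $f\from\Ord^M\to M$ under $[f]=[g]$ iff $\set{\alpha : f(\alpha)=g(\alpha)}\in U$, with $[f]\in^N[g]$ iff $\set{\alpha:f(\alpha)\in^M g(\alpha)}\in U$; let $j\from M\to N$ be $j(a)=[\const_a]$. I verify \Los's theorem by induction on formulas. The only delicate case is the existential quantifier, where $\set{\alpha:\exists x\,\varphi(x,f(\alpha))}\in U$ must be matched by a witnessing $g$; this $g$ is produced by an $M$-definable Skolem function, available because $M\satisfies\ZFC$. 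From \Los\ it follows at once that $j$ is elementary.

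Finally, to show $N$ is a top-extension of $j[M]$, suppose $[f]\in N$ satisfies $\mathrm{rank}^N([f])<j(\gamma)$ for some $\gamma\in\Ord^M$. By \Los\ the set $\set{\alpha:\mathrm{rank}^M(f(\alpha))<\gamma}$ lies in $U$, so after modifying $f$ on a non-$U$ set we may assume $f\from\Ord^M\to V_\gamma^M$. Requirement (ii), applied to this $f$, supplies some $c\in V_\gamma^M$ with $f^{-1}(c)\in U$, whence $[f]=j(c)\in j[M]$. Consequently no new element of $N$ can lie below an old rank, and identifying $M$ with $j[M]$ through the elementary embedding $j$ yields $M\elesub_t N$.

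The main obstacle to negotiate is the simultaneous compatibility of the tail and fiber requirements on $U$: the first is what produces a new ordinal lying above all of $\Ord^M$ (witnessed by the identity function on $\Ord^M$), while the second is what prevents any new element from appearing in a low $N$-rank stratum. Both are genuine density conditions in the countable forcing $\mathcal B$, and their joint satisfiability reduces to the replacement-driven pigeonhole observation used to verify density of (ii); without this second clause one would easily obtain an elementary end-extension adding new ordinals, but new elements could still appear in some $V_\gamma^N\setminus V_\gamma^M$, which is exactly what a top-extension forbids.
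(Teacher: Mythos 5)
Your overall strategy---a definable ultrapower of $M$ by an ultrafilter on the definable classes of ordinals, chosen to meet countably many density requirements that guarantee a new ordinal on top and force bounded definable functions to be constant on a set in $U$---is essentially the paper's route. But there is a genuine gap at the step you yourself flag as the only delicate one: you assert that the witnessing function $g$ for the existential clause of the \Los\ property ``is produced by an $M$-definable Skolem function, available because $M\satisfies\ZFC$.'' That is not available: \ZFC\ does not prove the existence of definable Skolem functions, and there are models of \ZFC\ with no parameter-definable global choice function. For such an $M$ your induction breaks at the existential quantifier, because from $\set{\alpha : M\satisfies\exists x\,\varphi(x,f(\alpha))}\in U$ there is no way to extract a single $M$-definable $g$ with $\varphi(g(\alpha),f(\alpha))$ holding on a set in $U$; taking the set of witnesses of minimal rank is definable, but uniformly selecting one element from each such set across a proper class of indices is exactly a global choice principle. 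Your fiber requirement (ii) does not rescue this, since the witnesses for an arbitrary formula need not lie in any fixed set $X\in M$.

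The paper repairs precisely this point before running the ultrapower: since $M$ is countable, one first meets the countably many definable dense classes of the forcing $\Add(\Ord,1)^M$ to produce a class $C\of\Ord^M$ such that $\<M,{\in^M},C>$ satisfies $\ZFC(C)$ together with global choice, and hence has definable Skolem functions in the expanded language; the ultrapower is then formed from $M[C]$-definable functions, with $U$ generic for the $M[C]$-definable unbounded classes of ordinals. With that amendment the rest of your argument goes through: your requirements (i) and (ii) correspond to the paper's density arguments showing that $U$ concentrates on no bounded class and that bounded definable functions are constant on a set in $U$, which is what makes the extension a top-extension rather than merely an end-extension. The moral is that the countability of $M$ must be used twice---once to add $C$, and once to build $U$---and the first use cannot be skipped.
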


Let us briefly sketch a folklore proof of this based on definable ultrapowers, as we shall subsequently make use of some of the ideas in the proof. One begins with a countable model $\<M,{\in^M}>\satisfies\ZFC$. The first step is to ensure the global choice axiom, by adding a predicate $C\of M$ such that $\<M,{\in^M},C>$ satisfies $\ZFC(C)$, the version of \ZFC\ that includes instances of the replacement axioms in the expanded language, and also has a $C$-definable well-ordering of the universe. This can be done by the forcing $\Add(\Ord,1)^M$ to add a Cohen class of ordinals $C\of\Ord^M$. Conditions in $\Add(\Ord,1)^M$ are simply the binary ordinal-length sequences $s\in ({}^{{<}\Ord}2)^M$, ordered by extension. Since $M$ is countable, we may find a filter $G\of\Add(\Ord,1)^M$ that meets every dense class $D\of\Add(\Ord,1)^M$ that is definable with parameters over $\<M,{\in^M}>$, and let $C\of\Ord^M$ be the class of which $\Union G$ is the characteristic function. The usual forcing arguments show that $\<M,{\in^M},C>$ satisfies \ZFC\ in the expanded language, using the fact that the forcing $\PP$ is $\kappa$-closed for every $\kappa$ in $M$ and hence adds no new sets. Meanwhile, $\<M,{\in^M},C>$ satisfies global choice, because every set in $M$ is coded by a set of ordinals, and it is dense that any particular set of ordinals shows up as a block in $C$; thus, we may define a global well-order by saying $a<b$ just in case $a$ is coded by a set of ordinals that appears earlier as a block in $C$ than any set of ordinals coding $b$. An isomorphic version of this forcing simply forces to add a bijection $\Ord^M\to M$ explicitly, with conditions consisting of a set-sized piece of such a bijection in $M$; or equivalently, one can generically add a set-like global well-ordering of $M$ by conditions consisting of initial segments of it in $M$.

Let us pause specifically to note that there are continuum many distinct such $C\of\Ord^M$ that we could add to $M$ in this way; in fact there are a perfect set of such $C$. The reason is that we actually have quite a bit of freedom in the construction of the generic filter $G$. Specifically, since $M$ is countable, there are only countably many dense classes $D\of\Add(\Ord,1)^M$ that are definable in $\<M,{\in^M}>$ from parameters, and so we may enumerate them $D_0,D_1,\ldots$, and so on. We build the the generic filter $G$ by selecting a descending sequence of conditions $p_0\geq p_1\geq \cdots$, such that $p_n\in D_n$, and then letting $G$ be the filter generated by these conditions. Notice that at stage $n$, we chose $p_n$ so as to extend the previous condition, but we could also have arbitrarily appended either a $0$ or $1$ on the end of this condition, before choosing $p_{n+1}$. Thus, there is a perfect tree all of whose branches are generic, and different branching choices therefore lead to continuum many different generic filters $G$ and therefore also to continuum many different resulting generic classes $C\of\Ord^M$.

Now, we have a countable model $M[C]=\<M,{\in^M},C>$ satisfying \ZFC\ in the expanded language and also satisfying the global choice principle. If $S$ is the collection of definable classes in this model, allowing parameters, then $\<M,{\in^M},S>$ is a model of the \Godel-Bernays \GBC\ axioms of set theory, and since the construction has the same first-order part, this observation shows that \GBC\ is conservative over \ZFC\ for first-order assertions about sets (an idea attributed to Solovay; see~\cite{Keisler1971:ModelTheoryForInfinitaryLogic}).

The next step of the construction is to find a suitable $M[C]$-ultrafilter $U$ on $\Ord^M$, measuring the definable classes of ordinals in $M[C]$. We find it illuminating to construct $U$ in the forcing style, as a certain kind of $M[C]$-generic filter. Specifically, let $\PP$ be the set of all unbounded $X\of\Ord^M$ that are definable in $M[C]$ from parameters, or in other words, unbounded $X\in S$. We think of this as a forcing notion, where $X$ is stronger than $Y$ if $X\of Y$. Now, let $U\of\PP$ be $M[C]$-generic, in the sense that $U$ contains a member of any dense set $D\of\PP$ that is a definable meta-class in $M[C]$, that is, for which $D=\set{X\in\PP\mid\<M,{\in^M},C,X>\satisfies\varphi(X,\vec a,C)}$ for some first-order formula $\varphi$ and parameter $\vec a\in M$. Since $M$ is countable, there are only countably many such dense meta-classes $D$, and so we may easily construct such an $M[C]$-generic $U$ simply by meeting these dense meta-classes one-by-one. By construction, $U$ does not concentrate on any bounded subset of $\Ord^M$. Note that for any unbounded $X\of\Ord^M$ in $\PP$, the collection of $Y$ such that $Y\of X$ or $Y\of\Ord^M\setminus X$ is dense and definable, and so $U$ thus decides every such unbounded definable set $X\of\Ord^M$ and is therefore an $M[C]$-ultrafilter. In a little while, we shall note a few other properties of $U$ that follow from $M[C]$-genericity.

Meanwhile, we undertake the definable ultrapower construction of $M$ with respect to $U$. For any two functions $F,F':\Ord^M\to M$ that are definable in $M[C]$ from parameters, we define the equivalence relation
 $$F=_UF'\quad\longleftrightarrow\quad\set{\alpha\in\Ord^M\mid F(\alpha)=F'(\alpha)}\in U,$$
and similarly the relation
 $$F\in_UF'\quad\longleftrightarrow\quad\set{\alpha\in\Ord^M\mid F(\alpha)\in F'(\alpha)}\in U,$$
which is well-defined on the $=_U$ equivalence classes $[F]_U$. Let $N$ be the set of such equivalence classes and consider the structure $\<N,{\in^N}>$, where $[F]_U\in^N[F']_U$ if $F\in_UF'$. Using the fact that $M[C]$ has a definable well-ordering of the universe and hence definable Skolem functions, we may establish by the usual induction on formulas that the \Los\ property holds:
$$\<N,{\in^N}>\satisfies\varphi([F]_U)\longleftrightarrow\set{\alpha\in\Ord^M\mid M\satisfies\varphi(F(\alpha))}\in U.$$
In particular, this is a model of \ZFC. Furthermore, the map $a\mapsto[c_a]_U$, where $c_a(\alpha)=a$ is the constant function, is an elementary embedding of $\<M,{\in^N}>$ into $\<N,{\in^N}>$.

Let us now make a few additional observations about the nature of this generic ultrapower. First, we claim that $\<N,{\in^N}>$ is an elementary top-extension of the image of $\<M,{\in^M}>$ in it. This is a consequence of the fact that every bounded definable function is constant on a set in $U$. That is, if $F:\Ord^M\to M$ is definable in $M[C]$ and $X=\set{\alpha\mid F(\alpha)\in V_\beta^M}\in U$, then there is some $a\in M$ such that $F=_U c_a$. One can see this by a simple density argument, since there must be some $a\in V_\beta^M$ such that $X_a=\set{\alpha\mid F(\alpha)=a}$ is unbounded, and it is dense below $X$ to get below some such $X_a$, which will ensure $F=_U c_a$ as desired. It follows that if $[F]_U$ is an element of the ultrapower with rank below that of some $[c_b]_U$ for $b\in M$, then $F(\alpha)\in V_{\text{rank}(b)}^M$ for $U$-almost all $\alpha$, and so $F$ is equal to a constant function $c_a$ for some $a\in V_\beta^M$ on a set in $U$. Thus, every new element of the ultrapower $\<N,{\in^N}>$ is above the rank of the copy of $\<M,{\in^M}>$ inside it. By identifying every $a\in M$ with its image $[c_a]_U$ in $N$, we thereby have a top-extension $\<M,{\in^M}>\sqsubseteq_t \<N,{\in^N}>$.

Further, we claim that $M$ is topless in this extension $N$. To see this, let us first prove that every definable function $F:\Ord^M\to\Ord^M$ is either constant or injective on a set in $U$. If $X\of\Ord^M$ is unbounded, then either $F\restrict X$ is bounded in $\Ord^M$, in which case we can shrink $X$ to some unbounded $X'\of X$ on which $F$ is constant, or $F\restrict X$ has unboundedly many values in $\Ord^M$, in which case we can shrink $X$ to some unbounded $X'\of X$ on which $F$ is injective. So it is dense that the desired property holds. If $F:X\to\Ord^M$ is injective on an unbounded definable set $X\of\Ord^M$, then by shrinking $X$ further, we may assume that $F$ is strictly increasing. Let $\xi_\alpha$ be the $\alpha^{\rm th}$ element of $X$, and let $Y=\set{\xi_{\alpha+1}\mid\alpha\in\Ord^M}$ be the successor elements, which is an unbounded definable subset of $X$. Define $F'(\xi_{\alpha+1})=F(\xi_\alpha)$, which is strictly less than $F(\xi_{\alpha+1})$ since we assumed $F$ was strictly increasing on $X$. Furthermore, $F'$ is also injective and therefore not constant on any unbounded set. So we have proved that it is dense that any definable function $F:\Ord^M\to\Ord^M$ that is not constant on a set in $U$ has a smaller function $F'$ that is also not constant on any set in $U$. Thus, there can be no smallest ordinal in $\<N,{\in^N}>$ above the ordinals corresponding to those in $\<M,{\in^M}>$, and so the extension is topless.

Finally, let us note that $C$ itself arises as the trace on $M$ of an element $c\in N$: $$C=(c^N)\intersect M.$$ Namely, let $F_C(\alpha)=C\intersect\alpha$, which is certainly definable in $M[C]$, and let $c=[F_C]_U$ in the ultrapower $N$. It follows easily that $C=(c)^N\intersect M$, since for $a\in M$ we have $N\satisfies a\in c$ just in case $M\satisfies a\in C$ by the \Los\ property.

Putting all these facts together, we have established the following:

\begin{lemma}\label{Lemma.ContinuumToplessElementaryTopExtensions}
 If $\<M,{\in^M}>\satisfies\ZFC$ is any countable model of set theory, then for continuum many $C\of M$, there is an elementary top extension $\<M,{\in^M}>\elesub_t\<N,{\in^N}>$, in which $M$ is topless and in which $C=(c^N)\intersect M$ arises as the trace on $M$ of an element $c\in N$.
\end{lemma}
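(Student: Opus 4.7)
My plan is simply to assemble the pieces already sketched in the preceding exposition into a single proof, being explicit about where continuum many traces come from and why the verification of ``topless'' and ``trace'' works.

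First I would start with a countable $\<M,{\in^M}>\satisfies\ZFC$ and enumerate the countably many classes $D\of\Add(\Ord,1)^M$ that are definable over $M$ with parameters as $D_0,D_1,\ldots$. I would then build the generic filter $G$ by choosing, at stage $n$, a condition $p_n\leq p_{n-1}$ with $p_n\in D_n$; but crucially, before making that choice I would freely append a $0$ or a $1$ to the previous condition. This branching freedom gives a perfect tree of generic filters, so continuum many distinct generic $C=\bigcup G\of\Ord^M$ arise, each with $\<M,{\in^M},C>\satisfies\ZFC(C)$ plus a $C$-definable global well-order (the usual $\kappa$-closure argument shows no new sets are added and global choice holds).

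Next, I would fix any one such $C$ and, inside $M[C]$, form the forcing notion $\PP$ of unbounded definable (with parameters) subclasses of $\Ord^M$, ordered by inclusion. A meeting-dense-sets-one-by-one construction produces an $M[C]$-generic filter $U\of\PP$, which by genericity (a) decides every unbounded definable $X\of\Ord^M$ and so is an $M[C]$-ultrafilter, and (b) contains no bounded set. I would then form the definable ultrapower $N$ of $M$ by $U$, using functions $F:\Ord^M\to M$ definable in $M[C]$ from parameters. The existence of definable Skolem functions from the $C$-definable global well-order lets the usual induction run, giving the \Los\ theorem, whence $N\satisfies\ZFC$ and $a\mapsto[c_a]_U$ is an elementary embedding $M\to N$; identifying $a$ with $[c_a]_U$ makes $M\elesub N$.

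Then I would verify the three remaining properties in turn, exactly as previewed. For \emph{top-extension}: if $[F]_U$ has rank below some $[c_b]_U$, then $F$ takes values in $V_{\rank(b)}^M$ on a set in $U$, and a straightforward density argument forces $F=_U c_a$ for some $a\in V_{\rank(b)}^M$, so no new elements appear below $M$'s ranks. For \emph{toplessness}: by density, any definable $F:\Ord^M\to\Ord^M$ is either constant or strictly increasing on a set in $U$; given a strictly increasing such $F$ on some unbounded $X$, pass to the successor elements $\xi_{\alpha+1}$ of $X$ and define $F'(\xi_{\alpha+1})=F(\xi_\alpha)$, producing a strictly smaller definable function that is also nonconstant on every unbounded set, so no least ordinal above $\Ord^M$ can exist in $N$. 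For \emph{trace}: set $F_C(\alpha)=C\cap\alpha$ and $c=[F_C]_U$; then for $a\in M$, \Los\ gives $N\satisfies a\in c$ iff $\{\alpha\mid a\in C\cap\alpha\}\in U$ iff $a\in C$, so $C=c^N\cap M$. Since the first step gave continuum many $C$, each yielding such an extension $N=N_C$, the lemma follows.

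The only delicate point I expect is the \emph{toplessness} step: one must be sure that the ``predecessor function'' $F'$ defined from $F$ is itself in the domain of functions used for the ultrapower, i.e.\ definable over $M[C]$ from parameters, which it is because $X$, $F$, and the successor-enumeration are all definable there. Everything else is bookkeeping with density arguments inside the generic ultrapower framework.
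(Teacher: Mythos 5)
Your proposal is correct and follows essentially the same route as the paper, which proves this lemma by exactly the argument you assemble: continuum many generic $C$ from the branching choices in building the $\Add(\Ord,1)^M$-generic, followed by the generic definable ultrapower with its top-extension, toplessness (via the predecessor-function density argument), and trace verifications. The one delicate point you flag, definability of $F'$ over $M[C]$, is handled the same way in the paper's discussion.
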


We shall use this lemma in our main construction in the next section. It may be interesting for the reader to know that there are $\omega_1$-like models of set theory having no elementary top-extensions, and so one may not omit the countability assumption in lemmas~\ref{Lemma.KeislerMorleyTopExtensions} and \ref{Lemma.ContinuumToplessElementaryTopExtensions} (see~\cite{Kaufmann1983:BluntAndToplessEndExtensionsOfModelsOfSetTheory}). This stands in contrast to the fact that every model of $\PA$, regardless of cardinality, has an elementary end-extension by the MacDowell-Specker theorem~\cite{KossakSchmerl2006:TheStructureOfModelsOfPA}.

Lastly, let us remark that although we found $U$ to be merely $M[C]$-generic and used $M[C]$-definable functions $F$ in the ultrapower construction, a more general approach would be to carry out the construction with respect to an arbitrary countable model of \Godel-Bernays set theory $\<M,\in,S>\satisfies\GBC$ and consider the resulting partial order $\PP$, consisting of conditions that are unbounded $X\of\Ord^M$ with $X\in S$, choosing $U\of\PP$ to be at least $\<M,\in,S>$-generic. The ultrapower in this case would be built out of equivalence classes of functions $F\in S$. One can in principle construct an ultrafilter $U\of\PP$ meeting any desired countable number of dense sets, whether or not these are first-order definable over $\<M,\in,S>$ or second-order definable or what have you. In our presentation above, we used mere $M[C]$-genericity simply because this was convenient and it sufficed for our application.

\section{Incomparable $\omega_1$-like models of set theory}

We shall now prove the first statement of the main theorem, namely, that there can be incomparable $\omega_1$-like models of set theory.

\begin{theorem}\label{Theorem.FamilyOfIncomparableModels}
If $\diamondsuit$ holds and \ZFC\ is consistent, then there is a family $\mathcal C$ of size $2^{\omega_1}$ consisting of pairwise-incomparable $\omega_1$-like models of $\ZFC$, that is, a family for which there is no embedding between any two distinct models in $\mathcal C$.
\end{theorem}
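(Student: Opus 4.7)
The plan is to construct a tree $\langle M_s : s \in 2^{<\omega_1}\rangle$ of countable models of $\ZFC$ with $M_\emptyset$ a fixed countable model, $M_s \elesub_t M_t$ for $s \subsetneq t$, unions at limits, and at each successor step two distinct topless elementary top-extensions $M_{s^\frown 0}, M_{s^\frown 1}$ of $M_s$ obtained via Lemma~\ref{Lemma.ContinuumToplessElementaryTopExtensions} from two distinct generic classes $C_{s,0}, C_{s,1} \of \Ord^{M_s}$, so that the distinguished ultrapower element $a_{s,i} \in M_{s^\frown i}$ has trace exactly $C_{s,i}$ on $M_s$. Each branch $f \in 2^{\omega_1}$ then yields an $\omega_1$-like model $M_f = \bigcup_{\alpha < \omega_1} M_{f \restriction \alpha}$. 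Bookkeeping arranges that each $M_s$ for $s \in 2^{\leq \alpha}$ has universe an ordinal $\leq \alpha$, so that subsets of $\alpha$ can code triples $(\sigma, \tau, \phi)$ with $\sigma, \tau \in 2^\alpha$ and $\phi : \alpha \to \alpha$. Since pairwise non-embeddability trivially forces distinctness, the family $\mathcal{C} = \{M_f : f \in 2^{\omega_1}\}$ will have size $2^{\omega_1}$.

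For the diagonalization, fix a $\diamondsuit$-sequence $\langle D_\alpha : \alpha < \omega_1 \rangle$ and decode $D_\alpha$ via the coding as a triple $(\sigma_\alpha, \tau_\alpha, \phi_\alpha)$. At each countable limit $\alpha$ at which $\sigma_\alpha \neq \tau_\alpha$ and $\phi_\alpha$ is a partial embedding $M_{\sigma_\alpha} \to M_{\tau_\alpha}$, declare a \emph{threat} and commit, for the remainder of the construction, to: (i) at the immediate successor step, choosing $C_{\sigma_\alpha, 0}, C_{\sigma_\alpha, 1}$ so that $\phi_\alpha[C_{\sigma_\alpha, 0}]$ and $\phi_\alpha[C_{\sigma_\alpha, 1}]$ do not occur as traces on $\phi_\alpha[M_{\sigma_\alpha}]$ of any element of $M_{\tau_\alpha}$; and (ii) at every later stage $\beta \geq \alpha$ along any branch $g \supseteq \tau_\alpha$, choosing the generic ultrafilter building $M_{g^\frown i}$ so that no new element of the extension has trace on $\phi_\alpha[M_{\sigma_\alpha}]$ equal to either $\phi_\alpha[C_{\sigma_\alpha, 0}]$ or $\phi_\alpha[C_{\sigma_\alpha, 1}]$. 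Each branch accumulates only countably many such constraints, each translating to a countable family of dense requirements on the relevant generic ultrafilter, and all can be met simultaneously by the standard generic construction.

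To derive incomparability, suppose for contradiction that $j : M_f \to M_g$ is an embedding with $f \neq g$. Coding $(f, g, j)$ as a subset of $\omega_1$, $\diamondsuit$ furnishes stationarily many $\alpha$ at which $D_\alpha$ decodes to $(f \restriction \alpha, g \restriction \alpha, j \restriction \alpha)$, and club-many of these additionally have $f \restriction \alpha \neq g \restriction \alpha$. Pick such an $\alpha$ at which a threat was declared and the blocking constraint is therefore in force at all subsequent stages along $g$. By construction $a_{f \restriction \alpha, f(\alpha)} \in M_{f \restriction \alpha + 1}$ has trace $C_{f \restriction \alpha, f(\alpha)}$ on $M_{f \restriction \alpha}$, so $j(a_{f \restriction \alpha, f(\alpha)}) \in M_g$ must have trace $\phi_\alpha[C_{\sigma_\alpha, f(\alpha)}]$ on $\phi_\alpha[M_{\sigma_\alpha}] = j[M_{f \restriction \alpha}]$, contradicting the blocking constraint.

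The main obstacle is the density argument supporting clause (ii): one must verify that at each stage along branch $g \supseteq \tau_\alpha$, the set of generic ultrafilters $U$ in the poset $\PP$ of Lemma~\ref{Lemma.ContinuumToplessElementaryTopExtensions} whose ultrapower contains no element with a given forbidden trace $T \of \phi_\alpha[M_{\sigma_\alpha}]$ is dense. For each definable function $F$ and each $y \in \phi_\alpha[M_{\sigma_\alpha}]$, the unbounded definable set $\{\gamma : y \in F(\gamma)\}$ can be freely decided by the genericity of $U$, so below any condition $X \in \PP$ one can force the trace of $[F]_U$ to differ from $T$ at some witness $y$. Accumulating countably many such constraints per branch is routine bookkeeping, but the uniform verification across continuum-many branches and all accumulated threats is the technical heart of the argument.
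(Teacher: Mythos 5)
Your overall architecture matches the paper's: a tree $\langle M_s\mid s\in{}^{\lt\omega_1}2\rangle$ of countable models with elementary top-extensions at successors and unions at limits, a $\diamondsuit$-sequence catching every potential embedding $j:M_f\to M_g$ at stationarily many levels, and a kill step that controls the trace of a distinguished new element. But the kill step itself has a genuine gap, and you have correctly located it yourself: clause (ii) does not follow from the density argument you sketch. The claim that ``the unbounded definable set $\set{\gamma\mid y\in F(\gamma)}$ can be freely decided by the genericity of $U$'' is false: a condition $X$ may already decide it, since $X$ could be contained (modulo a bounded set) in $\set{\gamma\mid y\in F(\gamma)}$ or in its complement. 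The bad case for your density claim is a definable $F$ and a condition $X$ such that, for every $y$ in the countable external set $\phi_\alpha[M_{\sigma_\alpha}]$, the value of ``$y\in F(\gamma)$'' stabilizes along $X$ to agree with the forbidden trace $T$; then every generic $U$ below $X$ produces $[F]_U$ with trace exactly $T$, and nothing in the forcing $\PP$ can prevent it. Nothing in your setup rules this out, because $T=\phi_\alpha[C_{\sigma_\alpha,i}]$ and $\phi_\alpha[M_{\sigma_\alpha}]$ are external to the model being extended, so its definable dense classes cannot see them. The one place this really bites is the stage $\beta=\alpha$ itself, where $M_{\tau_\alpha}$ is extended to $M_{\tau_\alpha\concat i}$: that is exactly the stage at which genuinely new traces on $M_{\tau_\alpha}$ are created, and your clause (i) only diagonalized against traces already realized in $M_{\tau_\alpha}$.

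The paper resolves this by reversing the order of quantifiers, which is the content of lemma~\ref{Lemma.KillingOneInstance}: first extend the target side to an arbitrary proper top-extension $N^*=M_{\tau_\alpha\concat i}$, and only then choose the distinguished element $c$ of $M^*=M_{\sigma_\alpha\concat i}$ so that its trace on $M_{\sigma_\alpha}$ avoids the countably many sets $X_b=j^{-1}(b^{N^*}\intersect N)$ for $b\in N^*$ (possible by lemma~\ref{Lemma.ContinuumToplessElementaryTopExtensions}, which supplies continuum many candidate traces). The point you are missing is the rank-truncation observation that makes this a permanent kill with no further maintenance: if $j^*$ extended $j$ with $b'=j^*(c)$ lying in any later top-extension $N^{**}$, then taking an ordinal $\delta$ of $N^*$ above all of $\Ord^N$ (here properness of $N\sqsubseteq_t N^*$ is used), the set $b=b'\intersect V_\delta$ already lies in $N^*$ and has the same trace on $N$ as $b'$, so the trace of $c$ would be one of the forbidden $X_b$ after all. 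In other words, no top-extension of $N^*$ ever realizes a trace on $N$ that is not already realized in $N^*$, so your clause (ii) is automatic at all stages $\beta>\alpha$ and unnecessary --- provided clause (i) is strengthened to diagonalize against the elements of $M_{\tau_\alpha\concat i}$ rather than only those of $M_{\tau_\alpha}$. With that reordering your argument goes through and coincides with the paper's; without it, the ``technical heart'' you flag remains an unproved and, as stated, unprovable density claim.
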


Since the models of any pairwise incomparable family must also of course be pairwise non-isomorphic, it follows that $2^{\omega_1}$ is the largest conceivable size for a family of such pairwise incomparable $\omega_1$-like models. We shall construct the members of the family in a transfinite construction of length $\omega_1$, appealing at each stage to an instance of lemma~\ref{Lemma.KillingOneInstance}, with the specific instance being determined by the $\diamondsuit$-sequence. One should think of lemma~\ref{Lemma.KillingOneInstance} as explaining how permanently to kill off a given embedding $j:M\to N$ of countable models, namely, having extended $N$ to $N^*$, we extend $M$ to $M^*$ in such a way that the embedding $j$ cannot be extended to domain $M^*$, even allowing for further top-extensions of $N^*$ to some $N^{**}$.

\eject
\begin{sublemma}\label{Lemma.KillingOneInstance}
 Suppose that $\<M,{\in^M}>$ and $\<N,{\in^N}>$ are countable models of $\ZFC$ and $j:M\to N$ is an embedding between them. If $N^*$ is any countable proper top-extension of $N$, then there is a countable elementary top-extension $M^*$ of $M$ such that $j$ cannot be extended to an embedding of $M^*$ into to any top-extension of $N^*$.

$$
\begin{tikzpicture}[scale=.3,>=latex]
 \draw[thick] (0,0) -- (3,6) --(-3,6) --(0,0);
 \draw (3,6) --(4,10) --(-4,10) --(-3,6);
 \node[below] at (-1.5,6) {$M$};
 \node[below] at (-2,10) {$M^*$};
 \draw[thick] (14,0) -- (17,6) --(11,6) --(14,0);
 \draw (17,6) -- (18,11) --(10,11) --(11,6);
 \draw[thin] (18,11) --(18.3,13) --(9.7,13) --(10,11);
 \node[below] at (15.5,6)  {$N$};
 \node[below] at (16.5,11) {$N^*$};
 \node[below] at (17,13) {$N^{**}$};
 \draw[->] (2,4) to [out=20, in=170] (11.5,5);
 \node at (5,4) {$j$};
 \node at (1,6.5) (c) [circle, fill=black,scale=.3,label=above:$c$] {};
 \node at (12,8) (jc) {};
 \node at (13,12) (jc2) {};
 \draw (jc)  circle (.2);
 \draw (jc2) circle (.2);
 \draw[->,dotted] (c) to [out=30, in=155] (jc);
 \draw[->,dotted] (c) to [out=40, in=170] (jc2);
 \node at (6,8.75) {\Large $\times$};
 \node at (7,11.25) {\Large $\times$};
 \draw[dashed] (0,0) to [out=100,in=-110] (.8,6);
\end{tikzpicture}
$$
\end{sublemma}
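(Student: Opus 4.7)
My plan is to construct $M^*$ by a generic ultrapower of $M$ in the style of the proof of Lemma~\ref{Lemma.ContinuumToplessElementaryTopExtensions}, engineering $M^*$ to contain a distinguished new element $d$ whose trace $C := d^{M^*} \cap M$ blocks every extension of $j$. The central observation is that any hypothetical extension $\tilde{j} : M^* \to N^{**}$ with $\tilde{j}(d) = e$ would force $e^{N^{**}} \cap j[M] = j[C]$, and also $e \notin j(x)^{N^*}$ for every $x \in M$ (using the top-extension condition $M \elesub_t M^*$, which ensures $d \notin x^{M^*}$ for every $x \in M$). So it suffices to pick $C$ so that $j[C]$ cannot arise as the $j[M]$-restriction of $e^{N^{**}}$ for any $e$ in any top-extension $N^{**}$ of $N^*$.

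To engineer this, I would first fix a witness $c \in N^* \setminus N$, with trace $A := c^{N^*} \cap N$ on $N$, to have a specific element of $N^* \setminus N$ in hand for reference. Then I invoke Lemma~\ref{Lemma.ContinuumToplessElementaryTopExtensions}, whose proof yields a perfect tree of continuum many Cohen-generic subsets $C \subseteq M$ (from the branching freedom in building a generic for $\Add(\Ord,1)^M$), each realizable as the trace of a new element $d = [F_C]_U$ in an elementary top-extension of $M$. A diagonalization through this perfect tree then picks a safe branch. In the case $\tilde{j}(d) = e \in N^*$, only countably many right-hand sides $j^{-1}[e^{N^*} \cap j[M]]$ arise (one per $e$ in the countable set $N^*$), and the perfect tree easily avoids them. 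In the case $\tilde{j}(d) = e \in N^{**} \setminus N^*$, the top-extension condition forces the $N^{**}$-rank of $e$ above $\Ord^{N^*}$, constraining $e$'s trace pattern; here I would argue that we may without loss of generality take $N^{**}$ to arise as a generic ultrapower extension of $N^*$, so that $e$ is parametrized by an $(N^*,C^*)$-definable function on $\Ord^{N^*}$ together with an ultrafilter on the class forcing $\PP$ from the proof of Lemma~\ref{Lemma.ContinuumToplessElementaryTopExtensions}. This is countable data over the countable model $N^*$, yielding countably many further forbidden values of $C$, which the perfect tree also avoids.

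The main obstacle is justifying the ``without loss of generality'' step: showing that any new element $e \in N^{**} \setminus N^*$ of an arbitrary top-extension with a given $j[M]$-trace can be replaced by an element of a generic ultrapower extension of $N^*$ with the same $j[M]$-trace. This should follow from the universality of generic-ultrapower extensions in realizing traces on $N^*$—indeed, the proof of Lemma~\ref{Lemma.ContinuumToplessElementaryTopExtensions} shows that generic-ultrapower traces include essentially every ``generically consistent'' subset—combined with the observation that two new elements agreeing on $j[M]$ produce the same obstruction. Once this reduction is in place, the countability of $N^*$ and its definable functions yields the countable parametrization of forbidden patterns, and the perfect tree of candidate $C$'s provides more than enough flexibility to pick a branch $C$ avoiding every forbidden value simultaneously, delivering the desired $M^*$ in which $d$ has no possible image under any extension of $j$.
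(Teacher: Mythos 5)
Your overall architecture matches the paper's: realize a carefully chosen subset $C\of M$ as the trace $c^{M^*}\intersect M$ of a new element in an elementary top-extension $M^*$ (via Lemma~\ref{Lemma.ContinuumToplessElementaryTopExtensions}), chosen to avoid every trace that a target of $c$ could induce. Your Case 1 (the target $e$ lies in $N^*$) is exactly the paper's count of the countably many sets $X_b=j^{-1}(b^{N^*}\intersect N)$ for $b\in N^*$. The gap is in your Case 2, where the target lies in $N^{**}\setminus N^*$. Your proposed reduction --- that one may assume without loss of generality that $N^{**}$ is a generic ultrapower of $N^*$ --- is unjustified: the lemma must rule out extensions of $j$ into \emph{arbitrary} top-extensions $N^{**}$ of $N^*$, which need not be elementary over $N^*$, let alone definable ultrapowers, and there is no universality theorem reducing them to such. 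Worse, even granting the reduction, your count fails: the data parametrizing a new element of a generic ultrapower includes the generic ultrafilter $U$, of which there are continuum many (indeed Lemma~\ref{Lemma.ContinuumToplessElementaryTopExtensions} itself shows that continuum many distinct traces on $N^*$ are realized by new elements of elementary top-extensions). So this route yields continuum many forbidden values of $C$, not countably many, and a diagonalization against continuum many forbidden sets among continuum many candidates is not guaranteed to succeed.

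The missing idea is a cut-down argument that reduces Case 2 to Case 1, and it uses precisely the hypothesis you never invoke: that $N^*$ is a \emph{proper} top-extension of $N$. Since $N^*\setminus N$ is nonempty and $N\sqsubseteq_t N^*$, there is an ordinal $\alpha\in\Ord^{N^*}$ above the rank of every element of $N$. Given any candidate target $b'=j^*(c)$ in any top-extension $N^{**}$ of $N^*$, set $b=b'\intersect V_\alpha$ computed in $N^{**}$; then $b$ has rank at most $\alpha$, so $b\in N^*$ because $N^{**}$ top-extends $N^*$, and since every $j(a)$ for $a\in M$ lies in $N$ and hence has rank below $\alpha$, we get $j(a)\in b'\longleftrightarrow j(a)\in b$. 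Thus the trace of $b'$ on $j[M]$ coincides with that of some $b\in N^*$, so the countably many sets $X_b$ with $b\in N^*$ already exhaust \emph{all} possible obstructions, and choosing $C\notin\set{X_b\mid b\in N^*}$ suffices. Without this step your argument does not close.
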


\begin{proof}
Suppose that $j:M\to N$ is an embedding of the countable models of set theory $\<M,{\in^M}>$ and $\<N,{\in^N}>$, and that $N\of_t N^*$ is a given top-extension (not necessarily elementary). For each $b\in N^*$, let $X_b=\set{a\in M\mid j(a)\in^{N^*}b}$, which is the same as the pre-image $j^{-1}(b^{N^*}\intersect N)$ of the trace of $b$ on $N$. Since $N^*$ is countable, there are only countably many such subsets $X_b$ of $M$. Thus, by lemma~\ref{Lemma.ContinuumToplessElementaryTopExtensions}, there is an elementary top-extension $M^*$ of $M$ with an element $c\in M^*$ whose trace on $M$, that is, $c^{M^*}\intersect M$, is not $X_b$ for any $b\in N^*$. It follows that $j$ has no extension to an embedding $j:M^*\to N^{**}$ to any top-extension $N^{**}$ of $N$, because there will be no suitable target for $c$. Specifically, for any such extension $j^*$ of $j$ consider $b'=j^*(c)$, and let $\alpha$ be an ordinal of $N^{**}$ that is above $N$ and below $N^*$, and let $b=b'\intersect V_\alpha$, so that $b\in N^*$ by the top extension property. But for $a\in M$ we have $a\in c\longleftrightarrow j^*(a)=j(a)\in j^*(c)\longleftrightarrow j(a)\in b$, since $j(a)$ is in $N$ and thus in $b'$ if and only if it is in $b$. This shows $c^{M^*}\intersect M=X_b$ after all, contrary to our choice of $c$.
\end{proof}

Thus, having extended $N$ to $N^*$, we may permanently kill off the embedding $j:M\to N$ by extending $M$ to $M^*$, as after this there can now be no suitable target for the object $c$.

\begin{proof}[Proof of theorem~\ref{Theorem.FamilyOfIncomparableModels}] Assume $\diamondsuit$ holds, which means that there is an $\omega_1$-sequence $\<A_\alpha\mid\alpha<\omega_1>$, fixed for the rest of the argument, such that $A_\alpha\of\alpha$ and for every $A\of\omega_1$, the set $\set{\alpha\mid A\intersect\alpha=A_\alpha}$ is stationary. We shall now assign to each countable-ordinal binary sequence $s\in{}^{\lt\omega_1}2$ a countable model $M_s=\<M_s,{\in^{M_s}}>\satisfies\ZFC$, in such a way that extending a sequence means elementarily top-extending the model, $s\of t\implies M_s\elesub_t M_t$. Further, we shall ensure that the construction is continuous at limit stages in the sense that $M_s=\Union_{\alpha<\lambda}M_{s\restrict\alpha}$ is the union of the corresponding elementary chain whenever $s$ has limit length $\lambda$. Similarly, at the very top, we define for each uncountable branch $S\in {}^{\omega_1}2$ the model $M_S$ as the union of the corresponding continuous elementary chain $M_S=\Union_{\alpha<\omega_1} M_{S\restrict\alpha}$, determined by the branch $S$. Thus, we have really built a continuous tree of models $M_s$, and our final family will consist precisely of the models $M_S$ arising as the branches through this tree. It will be convenient for us that the underlying set of each $M_s$ is a countable ordinal.

Our construction proceeds in $\omega_1$ many stages, defining $M_s$ by recursion on the length of $s$, so that $M_s$ for $s\in {}^\alpha 2$ will become defined at stage $\alpha$. We may begin at stage $0$ at the bottom with any desired countable model $M_{\emptyset}$ of \ZFC, with underlying set $\omega$. At most stages of the construction, including every finite stage and every stage that is neither a limit ordinal nor a successor to a limit ordinal, if $M_s$ has just been defined, then we will let $M_{s\concat 0}$ and $M_{s\concat 1}$ be arbitrary countable elementary top-extensions of $M_s$, using some larger countable ordinal as the underlying set. The interesting part of the construction occurs at a limit ordinal $\lambda$, where $M_s$ is defined for all $s\in {}^{\lt\lambda}2$. By continuity, we define $M_s$ for $s\in{}^\lambda 2$ as the union $M_s=\Union_{\alpha<\lambda} M_{s\restrict\alpha}$. Now, for the critical step, we consult the set $A_\lambda$ appearing in the diamond sequence and interpret it in some canonical manner as coding two elements $\sbar,\tbar\in{}^\lambda 2$ and a subset $j\of\lambda\times\lambda$. If it happens by some miracle that the underlying sets of $M_\sbar$ and $M_\tbar$ are both equal to $\lambda$ and furthermore that $j:M_\sbar\to M_\tbar$ is an embedding, then we define $M_{\tbar\concat 0}=M_{\tbar\concat 1}$ to be an arbitrary proper countable elementary top-extension of $M_\tbar$, and we define $M_{\sbar\concat 0}=M_{\sbar\concat 1}$ to be the elementary extension $M^*$ of lemma~\ref{Lemma.KillingOneInstance}, which ensures that this $j$ will not extend further to an embedding of these taller models (taking copies of these structures to have underlying set as a countable ordinal). If the miracle situation does not occur, then as we explained, the models are to be extended one more step in an arbitrary elementary top-extension manner. This completes the definition of $M_s$ for every $s\in{}^{\lt\omega_1}2$ and hence also of $M_S$ for $S\in {}^{\omega_1}2$.

By construction, each $M_S$ is the union of an elementary $\omega_1$-chain of proper top extensions of $M_\emptyset$, and hence is an $\omega_1$-like model of \ZFC. But we claim that there can be no embedding between distinct such models. To see this, suppose that $j:M_S\to M_T$ is an embedding, where $S\neq T$ in ${}^{\omega_1}2$. Let $A\of\omega_1$ code the three objects $S, T$ and $j$, using the same canonical coding method used in the construction. It follows by the $\diamondsuit$ principle that $A_\lambda=A\intersect\lambda$ for a stationary set of $\lambda$. Since the underlying set of $M_{S\restrict\lambda}$ and $M_{S\restrict\lambda}$ are both equal to $\lambda$ for a club of $\lambda$, and furthermore $j\image\lambda\of\lambda$ also occurs on a club of limit ordinals $\lambda$, there must be a stage $\lambda$ in the construction where the set $A_\lambda$ is exactly giving us $S\restrict\lambda$, $T\restrict\lambda$ and $j\restrict\lambda$, where the models $M_{S\restrict\lambda}$ and $M_{T\restrict\lambda}$ both have underlying set $\lambda$ and $j\restrict\lambda$ is an embedding between them. In this (miraculous) case, we specifically ensured that $M_{S\restrict\lambda+1}$ was chosen in such a way that $j\restrict M_{S\restrict\lambda}$ had no extension to an embedding of $M_{S\restrict\lambda+1}$ into any further top-extension of $M_{T\restrict\lambda+1}$. This contradicts our assumption that $j:M_S\to M_T$ is an embedding, since $j\restrict M_{S\restrict \lambda+1}$ would be such an embedding. So the family of models $\set{M_S\mid S\in{}^{\omega_1} 2}$ must admit no such embeddings after all, just as we claimed.
\end{proof}

Since the choice of $M_\emptyset$ was arbitrary, the proof actually shows that for any consistent theory $T$ extending $\ZFC$, there are $2^{\omega_1}$ many $\omega_1$-like pairwise non-embeddable models of $T$. For example, all the models will satisfy $V=L$, if $M_\emptyset$ does.

Let us now consider the question of whether the models $M_S$ in the family $\mathcal C$ constructed in theorem~\ref{Theorem.FamilyOfIncomparableModels} also serve as $\omega_1$-like counterexamples to the assertion that one model of set theory embeds into another, if the ordinals of the first model order-embed into the ordinals of the second.

\begin{theorem}\label{Theorem.FamilyOfIncomparableModelsSameOrdinals}
Under the hypothesis of theorem~\ref{Theorem.FamilyOfIncomparableModels}, the models in the family $\mathcal C$ can be constructed so that their ordinals all order-embed into one another, and furthermore, so that their ordinals are universal for all $\omega_1$-like linear orders. If \ZFC\ has an $\omega$-model, then the models in family $\mathcal C$ can be constructed so as all to have precisely the same ordinals.
\end{theorem}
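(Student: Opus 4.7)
For the first claim (ordinals universal for $\omega_1$-like linear orders and hence mutually order-embedding), my plan is to enhance the tree construction of Theorem~\ref{Theorem.FamilyOfIncomparableModels} with a parallel $\diamondsuit$-bookkeeping that builds a universal $\omega_1$-like linear order $\Lambda$ alongside, and ensures that each $\Ord^{M_S}$ contains a copy of $\Lambda$. The order $\Lambda$ is produced by the standard $\diamondsuit$-construction: at stage $\alpha$, the $\diamondsuit$-sequence presents a candidate $\omega_1$-like order, which we embed into the growing approximation $\Lambda_\alpha$. In parallel, at each successor stage of the $M_s$-construction, we exploit the continuum many choices of elementary top-extension furnished by Lemma~\ref{Lemma.ContinuumToplessElementaryTopExtensions} to arrange that the newly introduced ordinals of $M_s$ realize the next segment of $\Lambda$ in the prescribed order type. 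Since every $\Ord^{M_S}$ then contains $\Lambda$, each is itself universal, and mutual order-embeddability follows.

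For the second claim, assume \ZFC\ has an $\omega$-model and take $M_\emptyset$ to be a countable $\omega$-model of \ZFC. The plan is to predetermine $\Ord^{M_s}$ as a function only of $|s|$. Fix in advance a continuous increasing sequence $\<L_\alpha \mid \alpha<\omega_1>$ of countable linear orders with $L_0=\Ord^{M_\emptyset}$, and require that $\Ord^{M_s}=L_{|s|}$ for every $s$; then $\Ord^{M_S}=\bigcup_\alpha L_\alpha$ is the same $\omega_1$-like linear order for every branch $S$. The construction proceeds as in Theorem~\ref{Theorem.FamilyOfIncomparableModels}, except that at each successor stage we isomorphically relabel the elementary top-extension so that its new ordinals realize the predetermined $L_{\alpha+1}\setminus L_\alpha$. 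At miraculous stages handled by Lemma~\ref{Lemma.KillingOneInstance}, the relevant constraint is only on the trace $c^{M^*}\cap M$ of a single element $c$, which is compatible with any prescribed ordinal structure.

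The main obstacle is ensuring that the order type of the new ordinals added at each top-extension step can be made to match a predetermined target across all branches simultaneously. In the definable ultrapower of Lemma~\ref{Lemma.ContinuumToplessElementaryTopExtensions}, the order type of $\Ord^{M^*}\setminus\Ord^M$ depends on the generic ultrafilter $U$ and on the predicate $C$, so the hard part will be choosing $U$ carefully enough that this order type is uniform across branches while still retaining the continuum many distinguishing choices needed in Lemma~\ref{Lemma.KillingOneInstance}. The $\omega$-model assumption is essential here, as it provides a well-controlled starting ordinal structure and ensures coherence of the relabeling with the elementary theory; once uniformity is secured, the rest of the argument parallels the proof of Theorem~\ref{Theorem.FamilyOfIncomparableModels}.
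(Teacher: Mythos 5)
There is a genuine gap in both halves of your proposal. For the first claim, the step where you ``arrange that the newly introduced ordinals of $M_s$ realize the next segment of $\Lambda$ in the prescribed order type'' is not something the construction lets you do: the new ordinals of an elementary top-extension of a model of \ZFC\ are the tail of the ordinals of a \ZFC-model, hence closed under successor and ordinal arithmetic and (in the topless construction) without a least element, so their order type cannot be prescribed to match an arbitrary segment of a pre-built universal order. The continuum many choices in lemma~\ref{Lemma.ContinuumToplessElementaryTopExtensions} vary the trace of a single element $c$, not the order type of the new ordinals, so they give you no leverage here. The entire bookkeeping apparatus is also unnecessary: taking $M_\emptyset$ nonstandard already puts a copy of $\Q$ inside $\Ord^{M_\emptyset}$, and closure under ordinal addition places copies of $\Q$ unboundedly in every $\Ord^{M_s}$; hence each $\Ord^{M_S}$ contains $\Q\cdot\omega_1$, which is itself universal for $\omega_1$-like linear orders (send $A_0$ into the first copy of $\Q$ and each difference set $A_{\alpha+1}\setminus A_\alpha$ into a fresh copy above). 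That one observation is the paper's whole argument for the first claim.

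For the second claim, you correctly isolate the key difficulty --- that the order type of the new ordinals added at each step must be uniform across all branches --- but you do not resolve it; ``the hard part will be choosing $U$ carefully enough'' is precisely where the proof is missing, and no amount of relabeling helps until you know the order types agree. The resolution is Friedman's theorem: the ordinals of a countable $\omega$-standard nonstandard model of \ZFC\ have order type exactly $\lambda+\lambda\cdot\Q$, where $\lambda$ is the well-founded part. Starting from an $\omega$-standard nonstandard $M_\emptyset$ and always using the topless top-extension of lemma~\ref{Lemma.ContinuumToplessElementaryTopExtensions}, every model in the tree remains $\omega$-standard with the same well-founded part $\lambda$, so each step adds new ordinals of order type exactly $\lambda\cdot\Q$ \emph{regardless} of the choice of $U$ and $C$; uniformity is automatic and there is no tension with the continuum many choices needed for lemma~\ref{Lemma.KillingOneInstance}. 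Every branch then has ordinals of type $\lambda+(\lambda\cdot\Q)\cdot\omega_1$, and one passes to isomorphic copies to make them literally equal.
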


\begin{proof}
To prove this, we shall simply pay a little closer attention to the ordinals of the models $M_s$ in the construction of theorem~\ref{Theorem.FamilyOfIncomparableModels}. All the models $M_s$ in that construction have the model $M_\emptyset$ at the root as a common initial segment, and we may assume without loss that $M_\emptyset$ is nonstandard. It follows that $\Ord^{M_\emptyset}$ contains a copy of the countable dense linear order $\Q$, and since the ordinals are closed under addition, we will find copies of this $\Q$ unboundedly often in the ordinals $\Ord^{M_s}$ of each of the models $M_s$ that we construct. Consequently, the ordinals $\Ord^{M_S}$ of the models $M_S$ constructed at the top, where $S\in{}^{\omega_1}2$, will be an $\omega_1$-like linear order containing unboundedly many non-overlapping copies of $\Q$. In particular, $\Ord^{M_S}$ contains the long rational line $\Q\cdot\omega_1$ as a suborder. This order is easily seen to be universal for all $\omega_1$-like linear orders, since if $\<A,<>$ is any $\omega_1$-like linear order, realized as the union $A=\Union_{\alpha<\omega_1}A_\alpha$ of a continuous chain of countable initial segments, then we may map $A_0$ into the first copy of $\Q$ and map each difference set $A_{\alpha+1}-A_\alpha$ order-preservingly into a fresh copy of $\Q$ above what came below, thereby embedding all of $A$ into $\Q\cdot\omega_1$. So the ordinals $\Ord^{M_S}$ of every model $M_S\in \mathcal C$ are universal in this way and in particular, they all order-embed into one another.

By making the slightly stronger assumption that \ZFC\ has an $\omega$-model, we may ensure that all the models $M_S$ have precisely the same ordinals. Namely, begin by taking $M_\emptyset$ to be a countable $\omega$-standard nonstandard model of \ZFC. It follows by a result of Friedman~\cite{Friedman1973:CountableModelsOfSetTheories} that $\Ord^{M_\emptyset}$ has order type $\lambda+\lambda\cdot\Q$ for some admissible ordinal $\lambda$, which is simply the well-founded part of $\Ord^{M_\emptyset}$. Let us also assume that in the construction of the models, whenever we build a top-extension $M_{s\concat i}$ over $M_s$, we always do so by means of the construction described before lemma~\ref{Lemma.ContinuumToplessElementaryTopExtensions}, which means in particular that the extension $M_s\prec_t M_{s\concat i}$ is topless. Since the well-founded part of the ordinals of these models is still $\lambda$, the well-founded part of $M_\emptyset$, it follows that the additional ordinals of $\Ord^{M_{s\concat i}}$ on top of $\Ord^{M_s}$ have order-type precisely $\lambda\cdot\Q$. The final models $M_S$ at the top, therefore, arise by a process that places another $\Q$ copies of $\lambda$ on top of the previous model, performing this $\omega_1$ many times. Thus, the ordinals $\Ord^{M_S}$ of any of the models $M_S$ in $\mathcal C$ will have order type $\lambda+(\lambda\cdot\Q)\cdot\omega_1$. In particular, the ordinals of all these models are order-isomorphic and by replacing with an isomorphic copy we may assume that all the models $M_S$ have precisely the same ordinals.
\end{proof}

Thus, we have now proved statements (1) and (2) of the main theorem stated in the introduction. Let us turn briefly to statement (3), which can be established by a similar argument.

\begin{theorem}\label{Theorem.DiamondImpliesOmega1LikeZFCnotembedPA}
If $\diamondsuit$ holds and \ZFC\ is consistent, then there is an $\omega_1$-like model $M\models\ZFC$ and an $\omega_1$-like model $N\models\PA$ such that $M$ does not embed into $\<\HF,{\in}>^N$.
\end{theorem}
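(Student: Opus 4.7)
The plan is to adapt the construction of Theorem~\ref{Theorem.FamilyOfIncomparableModels} to the present setting, building simultaneously two continuous elementary chains of countable models, each of length $\omega_1$: a chain $\la M_\alpha : \alpha < \omega_1 \ra$ of models of $\ZFC$, with each step an elementary top-extension produced by Lemma~\ref{Lemma.KeislerMorleyTopExtensions}, and a chain $\la N_\alpha : \alpha < \omega_1 \ra$ of models of $\PA$ (starting with $N_0$ nonstandard), with each step an elementary end-extension produced by the MacDowell-Specker theorem. Setting $M = \Union_{\alpha < \omega_1} M_\alpha$ and $N = \Union_{\alpha < \omega_1} N_\alpha$ yields $\omega_1$-like models in their respective signatures. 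The $\diamondsuit$-sequence will be used to catch every purported embedding $j : M \to \HF^N$ at a stationary set of stages and destroy each candidate.

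The central new ingredient is an analog of Lemma~\ref{Lemma.KillingOneInstance}: if $j : M \to \HF^N$ is an embedding of a countable $M \satisfies \ZFC$ into the hereditarily finite sets of a countable $N \satisfies \PA$, and $N \elesub N^*$ is any countable proper elementary end-extension, then there is a countable elementary top-extension $M \elesub_t M^*$ such that $j$ cannot extend to an embedding of $M^*$ into $\HF^{N^{**}}$ for any elementary end-extension $N^{**}$ of $N^*$. The argument parallels the original: for each $b \in \HF^{N^*}$, the trace $X_b = \set{a \in M \mid j(a) \in b}$ is a subset of $M$, and since $\HF^{N^*}$ is countable there are only countably many such $X_b$; Lemma~\ref{Lemma.ContinuumToplessElementaryTopExtensions} then provides $M \elesub_t M^*$ with an element $c \in M^*$ whose trace $c^{M^*} \intersect M$ differs from every $X_b$.

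The main obstacle is replacing the ``$b' \intersect V_\alpha$'' top-extension step of the original proof, since $\HF^{N^{**}}$ has no $V_\alpha$ sitting between $\HF^N$ and $\HF^{N^*}$ in the usual set-theoretic rank hierarchy. The replacement is an Ackermann-coding truncation: given any purported extension $j^*(c) = b' \in \HF^{N^{**}}$, fix any $\alpha_0 \in N^* \setminus N$ (which exists because $N^*$ properly end-extends $N$), and define $b = \set{x \in b' \mid \text{code}(x) < \alpha_0}$, computed inside $N^{**}$. The Ackermann code of $b$ is bounded by $2^{\alpha_0}$, which lies in $N^*$ since $\alpha_0 \in N^*$ and $N^* \satisfies \PA$; because $N^*$ is an initial segment of $N^{**}$, the code of $b$ is already in $N^*$, so $b \in \HF^{N^*}$. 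Moreover, every $j(a)$ for $a \in M$ lies in $\HF^N$ and hence has Ackermann code in $N$, which is strictly below $\alpha_0$, so $j(a) \in b \Longleftrightarrow j(a) \in b'$. Thus $c^{M^*} \intersect M = X_b$ for this $b \in \HF^{N^*}$, contradicting the choice of $c$.

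With the sublemma in hand the main construction runs exactly as in Theorem~\ref{Theorem.FamilyOfIncomparableModels}: at non-critical stages extend $M_\alpha$ and $N_\alpha$ arbitrarily, and at each limit stage $\lambda$ first extend $N_\lambda$ to $N_{\lambda+1}$ by any proper elementary end-extension and then consult $A_\lambda$, interpreted canonically as coding a function $j : \lambda \to \lambda$ together with data identifying $M_\lambda$ and $N_\lambda$ on $\lambda$; if this $j$ turns out to be an embedding of $M_\lambda$ into $\HF^{N_\lambda}$, apply the sublemma with $N^* = N_{\lambda+1}$ to select $M_{\lambda+1}$ killing $j$. Any embedding $j : M \to \HF^N$ of the final models must, by $\diamondsuit$ together with the usual club arguments that $M_\lambda$ and $N_\lambda$ both have underlying set $\lambda$ and that $j \image \lambda \of \lambda$ on a club, be caught at some such stage $\lambda$ with $j \restrict M_\lambda$ an embedding into $\HF^{N_\lambda}$; then $j \restrict M_{\lambda+1}$ would be an embedding into $\HF^{N_\beta}$ for some $\beta > \lambda$, an elementary end-extension of $N_{\lambda+1}$, violating the conclusion of the sublemma.
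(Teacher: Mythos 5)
Your proposal is correct and follows essentially the same route as the paper: the same two simultaneous elementary chains, the same use of $\diamondsuit$ to catch and kill candidate embeddings at limit stages, and the same sublemma adapting Lemma~\ref{Lemma.KillingOneInstance} to $\HF$ of a \PA-model. The one place you go beyond the paper is that you actually prove that sublemma (the paper omits its proof), and your Ackermann-coding truncation $b=\set{x\in b'\mid \mathrm{code}(x)<\alpha_0}$, replacing the rank truncation $b'\intersect V_\alpha$, is exactly the right substitute and is carried out correctly.
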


\begin{proof}
To construct $M$ and $N$, we shall carry out a simplified version of the construction of the proof of theorem~\ref{Theorem.FamilyOfIncomparableModels}. First, we note that the proof of lemma~\ref{Lemma.KillingOneInstance} also establishes an analogous fact for models of finite set theory; we omit the proof.

\begin{sublemma}\label{Lemma.KillingOneInstanceFiniteZFC}
 If $j:M\to \HF^N$ is an embedding of a model of set theory $M\satisfies\ZFC$ into the hereditary finite sets $\HF^N$ of a countable model of arithmetic $N\satisfies\PA$ and $N\prec_e N^*$ is any proper elementary end-extension, then there is an elementary top-extension $M\elesub_t M^*$ such that $j$ does not extend to an embedding $j:M^*\to \HF^{N^{**}}$ for any further end-extension $N^*\subseteq_e N^{**}$ of $N^*$.
\end{sublemma}

\noindent Given this lemma, we shall prove the theorem by building the models $M=\Union_{\alpha<\omega_1}M_\alpha$ and $N=\Union_{\alpha<\omega_1}N_\alpha$ as the unions of corresponding elementary chains of countable models $M_\alpha$  and $N_\alpha$. We may begin with any two countable models $M_0\satisfies\ZFC$ and $N_0\satisfies\PA$. At most stages, including every finite stage and every stage that is neither a limit ordinal nor a successor to a limit ordinal, we let $M_{\alpha+1}$ be an arbitrary proper elementary top-extensions of $M_\alpha$ and let $N_{\alpha+1}$ be an arbitrary proper elementary end-extension of $N_\alpha$, using some countable ordinal as an underlying set. At a limit stage $\lambda$, we first define $M_\lambda=\Union_{\alpha<\lambda}M_\alpha$ and $N_\lambda=\Union_{\alpha<\lambda}N_\alpha$ to be the union of the corresponding elementary chains of models constructed so far. Next, the critical step, we consult the $\diamondsuit$-sequence, interpreting it as a set $j\of\lambda\times\lambda$, and if it happens (by some miracle) that the underlying sets of $M_\lambda$ and $N_\lambda$ are both equal to $\lambda$ and $j:M_\lambda\to \HF^{N_\lambda}$ is an embedding, then we first properly elementarily end-extend $N_\lambda$ to $N_{\lambda+1}$. It follows that $\HF^{N_{\lambda+1}}$ is an elementary top-extension of $\HF^{N_\lambda}$, and so by lemma~\ref{Lemma.KillingOneInstanceFiniteZFC} we may extend $M_\lambda$ to $M_{\lambda+1}$ in such a way that prevents $j$ from extending to this larger domain. It follows as before that there can be no embedding $j:M\to\HF^N$ ultimately, because initial segments of this embedding will have been prevented from extending, just as in the proof of theorem~\ref{Theorem.FamilyOfIncomparableModels}.
\end{proof}

Let us turn now to the final statement of the main theorem, asserting that it is consistent relative to a Mahlo cardinal that there is a transitive $\omega_1$-like model $M\models\ZFC$ that does not embed into its constructible universe $L^M$. A cardinal $\kappa$ is {\df Mahlo}, if it is inaccessible and the regular cardinals below $\kappa$ form a stationary subset of $\kappa$. Note that in order to prove the statement, at least some large cardinal assumption will be necessary (as well as $V\neq L$), since there is an $\omega_1$-like transitive model of \ZFC\ just in case $L_{\omega_1}\satisfies\ZFC$, and this is equivalent to the assertion that $\omega_1$ is inaccessible in $L$. This hypothesis is equiconsistent with the existence of an inaccessible cardinal, since any inaccessible cardinal $\kappa$ can become the $\omega_1$ of a forcing extension, by forcing for example with the \Levy\ collapse of $\kappa$.

\begin{theorem}\label{th:incomparableTransitive}
If $\kappa$ is Mahlo, then there is a forcing extension of $L$ in which $\kappa$ becomes $\omega_1$ and where there is a transitive $\omega_1$-like model $M\satisfies\ZFC$ that does not embed into its own constructible universe $L^M$.
\end{theorem}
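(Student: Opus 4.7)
The plan is to force over $L$ with the L\'evy collapse $\Coll(\omega,{<}\kappa)$, obtaining $L[G]$ in which $\kappa = \omega_1$, $\diamondsuit$ holds, and $L^{L[G]} = L$, so that $L^M = L_\kappa$ for any transitive model $M$ of height $\kappa$ in $L[G]$. Inside $L[G]$ I would then construct $M = \bigcup_{\alpha<\kappa} M_\alpha$ as a continuous chain of countable transitive models of \ZFC\ with each $M_\alpha \elesub M_{\alpha+1}$ an elementary end-extension, starting with some $M_0$ containing a non-constructible real to ensure $M \neq L_\kappa$. At limit stages $\lambda$ the $\diamondsuit$-sequence would predict a potential embedding $j_\lambda\colon M_\lambda \to L_\kappa$, and in the miraculous case I would choose $M_{\lambda+1}$ to block $j_\lambda$ from extending, in complete analogy with the proof of theorem~\ref{Theorem.FamilyOfIncomparableModels}; the standard $\diamondsuit$ argument then rules out any embedding $M \to L_\kappa$.

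The key technical tool is a transitive analogue of lemma~\ref{Lemma.KillingOneInstance}: for any countable transitive $\bar M \satisfies \ZFC$ in $L[G]$ and any embedding $j\colon \bar M \to L_\kappa$, there should be a countable transitive elementary end-extension $\bar M^*$ of $\bar M$ in $L[G]$ such that $j$ does not extend to any embedding $\bar M^* \to L_\kappa$. As in the proof of lemma~\ref{Lemma.KillingOneInstance}, it suffices to find some $c \in \bar M^*$ whose trace $c \intersect \bar M$ differs from every $X_b := j^{-1}(b \intersect j(\bar M))$ for $b \in L_\kappa$. The critical observation is that since $L_\kappa \of L$, each $X_b$ lies in the inner model $L[j,\bar M]$, so it is enough to realize as a trace any $c \of \bar M$ lying \emph{outside} $L[j,\bar M]$; and such $c$ abound in $L[G]$ because the L\'evy collapse adds many Cohen-generic reals over any fixed countable parameter.

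The Mahlo hypothesis enters to supply transitive ambient models in which $\bar M^*$ can be realized: stationarily many $\beta < \kappa$ are inaccessible in $L$, and for such $\beta$, $L_\beta[G_\beta]$ (where $G_\beta = G \intersect \Coll(\omega,{<}\beta)$) is a transitive set-sized model of \ZFC\ of height $\beta < \kappa$. Choosing $\beta$ large enough that $\bar M$, $j$, and $c$ all lie in $L_\beta[G_\beta]$, I would obtain $\bar M^*$ as the Mostowski collapse of a countable elementary submodel of $L_\beta[G_\beta]$ containing $\bar M \cup \{c\}$.

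I expect the main obstacle to lie in ensuring that $\bar M^*$ so constructed is a genuine \emph{elementary} end-extension of $\bar M$, not merely a transitive extension containing it. A naive Mostowski-collapse argument yields $\bar M \of \bar M^* \elesub L_\beta[G_\beta]$ but not $\bar M \elesub \bar M^*$, unless $\bar M$ is already elementary in $L_\beta[G_\beta]$. The remedy is to build the whole chain $\langle M_\alpha\rangle$ coherently, arranging each $M_\alpha$ to be the Mostowski collapse of a countable elementary submodel of a fixed ambient $L_{\beta_\alpha}[G_{\beta_\alpha}]$, with $\langle \beta_\alpha\rangle$ drawn from the Mahlo set so that the ambient models form an elementary tower. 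Elementarity of $M_\alpha \elesub M_{\alpha+1}$ then comes for free, and the diamond-driven blocking steps slot into the recursion at the appropriate stages. Verifying that this synchronized construction is compatible with the trace-avoidance requirement at each blocking step, and that the resulting $M$ really is transitive of height exactly $\kappa$, is the delicate technical heart of the argument.
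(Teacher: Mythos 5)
Your strategy---\Levy\ collapse alone, plus a $\diamondsuit$-driven chain of countable \emph{transitive} models with a transitive analogue of lemma~\ref{Lemma.KillingOneInstance}---is not the paper's route, and it has a genuine gap at exactly the point you call the ``delicate technical heart'': the transitive killing lemma is not available in the form you need. The freedom exploited in lemma~\ref{Lemma.ContinuumToplessElementaryTopExtensions} (continuum many realizable traces) comes from the definable ultrapower, which produces \emph{topless}, hence ill-founded, top-extensions; it gives nothing in the transitive category. If $\bar M\elesub_t\bar M^*$ with both transitive and $\beta=\Ord\cap\bar M$, then $\bar M=V_\beta^{\bar M^*}$ and the realizable traces are exactly $\power(V_\beta)^{\bar M^*}$, a collection rigidly constrained by $\mathrm{Th}(\bar M,a)_{a\in\bar M}$. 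Two concrete failures: (i) a countable transitive model of \ZFC\ need not have \emph{any} proper transitive elementary top-extension (the minimal one has none, since such an extension would correctly verify that its $V_\beta$ is a transitive set model of \ZFC\ and elementarity would push this back into $\bar M$, contradicting minimality); (ii) if $\bar M\satisfies$ ``every set is constructible from a real''---which holds if, say, your $M_0$ is $L_\gamma[z]$ for a nonconstructible real $z$---then every $c\in\power(V_\beta)^{\bar M^*}$ satisfies $\bar M^*\satisfies c\in L[x]$ for some real $x$; by transitivity this is true, and $x$ has rank below $\beta$, so $x\in\bar M$ and hence $x$, and therefore $c$, lies in $L[r]$ for your parameter $r$ coding $\bar M$ and $j$. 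So \emph{every} realizable trace is in $L[r]$ and the killing step is impossible. The abundance of Cohen reals over $L[r]$ in $L[G]$ is beside the point: the new subset of $\bar M$ must be an element of a transitive \ZFC-model elementarily top-extending $\bar M$, not merely an element of $L[G]$. A further error: for the \Levy\ collapse, $L_\beta[G_\beta]$ is \emph{not} a model of \ZFC\ (it is essentially $H_{\omega_1}$ of $L[G_\beta]$, where every set is countable), so the ambient tower you propose is built from the wrong structures; only a forcing that preserves the inaccessibility of cofinally many $\beta<\kappa$ can supply them.

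The paper avoids all of this by dropping $\diamondsuit$ and the chain construction entirely. It forces over $L$ with the Easton-support product $\PP=\Pi_{\gamma<\kappa}\Add(\gamma,1)$ times the \Levy\ collapse $\Q=\Coll(\omega,{<}\kappa)$, and takes $M=L_\kappa[G]$ outright. Given a name $\sigma$ for an embedding $j:L_\kappa[G]\to L_\kappa$, the $\kappa$-c.c.\ yields a club of $\gamma$ at which $\sigma\restrict\gamma$ is determined by stage $\gamma$; Mahloness supplies an inaccessible $\delta$ in that club, where $\PP$ forces nontrivially, adding a Cohen set $A\of\delta$ generic over $L[G_\delta][H_\delta]$. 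Since $j\restrict\delta\in L[G_\delta][H_\delta]$ and $j(A)\in L$, the equivalence $\alpha\in A\longleftrightarrow j(\alpha)\in j(A)$ computes $A$ inside $L[G_\delta][H_\delta]$, contradicting genericity. Note that any repair of your construction would in effect need this Easton product anyway, to guarantee cofinally many generic subsets of rank-initial segments that are realizable inside transitive elementary top-extensions---at which point the $\diamondsuit$ scaffolding is doing no work.
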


\begin{proof}
If $\kappa$ is Mahlo, then this is absolute down to $L$, and so we may assume without loss that $V=L$ in our ground model. The forcing will have two large steps: the first step will create the desired model $M=L_\kappa[G]$; and the second step will be the \Levy\ collapse of $\kappa$, ensuring that this model becomes $\omega_1$-like in the final extension $L[G][H]$.

To begin, let $\PP=\Pi_{\gamma<\kappa}\Add(\gamma,1)$ be the Easton-support product of the forcing to add a Cohen subset to every regular cardinal $\gamma$ below $\kappa$ (so the product here is indexed by the cardinals $\gamma$ below $\kappa$). Since $\kappa$ is Mahlo, this is $\kappa$-c.c.~and the usual Easton factor arguments show that all cardinals and cofinalities are preserved. Second, let $\Q=\Coll(\omega,{<}\kappa)$ be the \Levy\ collapse of $\kappa$, that is, the finite-support product $\Q=\Pi_{\gamma<\kappa}\Coll(\omega,\gamma)$, which collapses every cardinal $\gamma$ below $\kappa$ to $\omega$. For any $\delta<\kappa$, let $\PP_\delta=\PP\restrict\delta=\Pi_{\gamma<\delta}\Add(\gamma,1)$ and $\Q_\delta=\Q\restrict\delta=\Pi_{\gamma<\delta}\Coll(\omega,\gamma)$ be the corresponding initial segments of the forcing $\PP$ and $\Q$. Suppose that $G\times H\subseteq\PP\times\Q$ is $V$-generic, and let $G_\delta$ and $H_\delta$ be the restrictions of $G$ and $H$ to the initial segments $\PP_\delta$ and $\Q_\delta$.

Since $\kappa$ was Mahlo in $L$, we have $L_\kappa\satisfies\ZFC$. From the perspective of $L_\kappa$, the forcing $\PP$ is progressively closed class forcing, and so $L_\kappa[G]\satisfies\ZFC$ as well. The \Levy\ collapse $\Q$ is $\kappa$-c.c. over $L[G]$ and forces $\kappa=\omega_1^{L[G][H]}$. It follows that $L_\kappa[G]$ is $\omega_1$-like in $L[G][H]$.

We claim that there is no embedding $j:L_\kappa[G]\to L_\kappa$ in $L[G][H]$. Suppose toward contradiction that $j$ is such an embedding. Fix a $\PP\times\Q$-name $\sigma$ such that $\sigma_{G\times H}=j$ and a condition $(p,q)\in G\times H$ forcing that $\sigma$ is an embedding from $L_\kappa[G]$ to $L_\kappa$. Let us say that $\sigma\restrict\gamma$ is \emph{determined} by stage $\gamma$ if for each $\xi<\gamma$ there is a maximal antichain below $(p,q)$ in $\PP\times\Q$, with support contained in $\gamma$ in each factor, such that every condition in the antichain decides $\sigma(\check\xi)$. It follows in this case that $\sigma_{G\times H}\restrict\gamma$ is already in $V[G_\gamma][H_\gamma]$.

Since $\PP\times\Q$ is $\kappa$-c.c., it is easy to see by a simple closing-off argument that there is a club subset $C\of\kappa$ such that $\sigma\restrict\gamma$ is determined by stage $\gamma$ for all $\gamma\in C$. Since $\kappa$ is Mahlo, there is such a $\delta\in C$ that is inaccessible, and in particular, $\delta$ is a stage of nontrivial forcing in $\PP$. Let $A\of\delta$ be the Cohen set added by $\Add(\delta,1)$ in the forcing $\PP$ at coordinate $\delta$. Thus, $A$ is $L[G_\delta][H_\delta]$-generic. Since $\sigma\restrict\delta$ is determined by stage $\delta$, it follows that $j\restrict\delta\in L[G_\delta][H_\delta]$. By assumption, $j(A)\in L$. Since $j$ is an embedding, we have $\alpha\in A\longleftrightarrow j(\alpha)\in j(A)$, and from this it follows that $A\in L[G_\delta][H_\delta]$, contrary to genericity.
\end{proof}

Note that if we omit the second part of the forcing, what we have is the $\kappa$-like model $L_\kappa[G]$ in $L[G]$, which in $L[G]$ does not embed into its constructible universe $L_\kappa=L^{L_\kappa[G]}$. The only purpose of the \Levy\ collapse was to enable the phenomenon to occur with an $\omega_1$-like model. A similar argument shows that if \Ord\ is Mahlo in $V$, then in the corresponding forcing extension $V[G]$, where we undertake the Easton-support iteration to add a Cohen subset to each regular cardinal, there is no class $j$ that is an embedding $j:V[G]\to L$.

\section{Questions}

Several questions surrounding the subject of this article remain open. First, we wonder whether we really need the $\diamondsuit$ hypothesis in the main theorem.

\begin{question}
Can we eliminate the $\diamondsuit$ assumption in the main theorem? Specifically, is the existence of embedding-incomparable $\omega_1$-like models of \ZFC\ provable in \ZFC\ from the consistency of \ZFC?
\end{question}

Analogous situations have often arisen in the context of models of arithmetic, where the first example of an $\omega_1$-like model with certain features is constructed under the $\diamondsuit$ hypothesis, but subsequent more refined arguments eliminate the need for that assumption  (see~\cite{Shelah1978:ModelsWithSecondOrderPropertiesIITreesWithNoUndefinedBranches} for the general $\diamondsuit$ elimination technique). So we are accustomed in the subject to positive resolutions of similar instances of this question. Further evidence for a positive answer may be the fact that Kossak~\cite{Kossak1985:RecursivelySaturatedOmega1LikeModels} showed, with no $\diamondsuit$ assumption, that there is pair of elementarily equivalent $\omega_1$-models of $\PA$ with the same standard system, such that neither embeds into the other. His proof technology uses minimal types, conservative extensions, and most importantly, the fact that embeddings of $\PA$-models are automatically $\Delta_0$-elementary. This last point, as we have noted, is not true for embeddings of models of set theory and suggests that Kossak's proof for models of $\PA$ will not generalize directly to the $\ZFC$ context.

\begin{question}
 Is it consistent relative to an inaccessible cardinal that there is an $\omega_1$-like transitive model $M\models\ZFC$ having no embedding $j:M\to L^M$ into its own constructible universe?
\end{question}

In other words, can the Mahlo cardinal hypothesis of theorem~\ref{th:incomparableTransitive} be reduced to merely an inaccessible cardinal? As we noted in the discussion before theorem~\ref{th:incomparableTransitive}, the existence of an $\omega_1$-like transitive model of \ZFC\ is equivalent to the assertion that $\omega_1$ is inaccessible in $L$, and so one needs at least an inaccessible cardinal. If one drops the transitivity requirement, then it is conceivable that an argument could proceed merely from $\Con(\ZFC)$.

\begin{question}
 Is it consistent relative to $\Con(\ZFC)$ that there is a (possibly nonstandard) $\omega_1$-like model $M\models\ZFC$ having no embedding $j:M\to L^M$ into its own constructible universe?
\end{question}

We have as yet no nonstandard instances of such a model, from any hypothesis. The model constructed in theorem~\ref{th:incomparableTransitive} was standard, and used the hypothesis of a Mahlo cardinal. We expect that one may be able to construct nonstandard instances from much weaker hypotheses.

Finally, we have some questions concerning the absoluteness of the nonexistence of embeddings between $\omega_1$-like models.

\begin{question}
Is it consistent that there are $\omega_1$-like models $M$ and $N$ of $\ZFC$ such that neither embeds into the other, yet there are $\omega_1$-preserving forcing notions adding embeddings in either direction? Conversely, is it consistent to have such incompatible models with the property that in any outer model that sees an embedding in either direction, $\omega_1$ is collapsed?
\end{question}

\bibliographystyle{alpha}
\bibliography{HamkinsBiblio,MathBiblio}

\end{document}